\documentclass[preprint,11pt,a4paper]{elsarticle}
\usepackage{amsmath,amsfonts,amssymb,amsthm,graphics,color}

\theoremstyle{plain}
\newtheorem{theorem}{Theorem}

\newtheorem{proposition}[theorem]{Proposition}

\theoremstyle{definition}
\newtheorem{assumption}{Assumption}
\newtheorem{example}{Example}
\newtheorem*{remark}{Remark}

\evensidemargin 0cm
\oddsidemargin 0cm
\topmargin -1cm
\textheight 23 true cm
\textwidth 16 true cm

\usepackage{xcolor}


\journal{Applied Mathematics and Computation}
\date{October 24, 2025}

\begin{document}
\title{Exponential integrators for parabolic problems with non-homogeneous boundary conditions}

\author[uva]{Carlos Arranz-Sim\'{o}n\corref{cor1}}
\ead{carlos.arranz@uva.es}

\author[uibk,disc]{Alexander Ostermann}
\ead{alexander.ostermann@uibk.ac.at}

\cortext[cor1]{Corresponding author}
\address[uva]{IMUVA-Mathematics Research Institute, Facultad de Ciencias, University of Valladolid, 47011 Valladolid, Spain}
\address[uibk]{Department of Mathematics, University of Innsbruck, 6020 Innsbruck, Austria}
\address[disc]{ Digital Science Center, University of Innsbruck, 6020 Innsbruck, Austria}

\begin{abstract}
Exponential Runge--Kutta methods are a well-established tool for the numerical integration of parabolic evolution equations. However, these schemes are typically developed under the assumption of homogeneous boundary conditions. In this paper, we extend classical convergence results to the case of non-homogeneous boundary conditions. Since non-homogeneous boundary  conditions typically cause order reduction, we introduce a correction strategy based on smooth extensions of the boundary data. This results in a reformulation as a homogeneous problem with modified source term, to which standard exponential integrators can be applied. For linear problems, we prove that the corrected schemes recover the expected convergence order, and that higher orders can be attained with suitable quadrature rules, reaching order $2s$ for $s$-stage Gauss collocation methods. For semilinear problems, our approach preserves the convergence orders guaranteed by exponential Runge--Kutta methods satisfying the corresponding stiff order conditions. Numerical experiments validate the theoretical findings.
\end{abstract}
\begin{keyword}
Exponential Runge--Kutta methods \sep non-homogeneous boundary conditions \sep order reduction
\MSC[2020] 65M15 \sep 65M12 \sep 65L04
\end{keyword}

\maketitle

\section{Introduction}

Exponential Runge--Kutta integrators have proven to be very efficient methods for solving stiff differential equations, a challenging topic in numerical analysis. These methods have been used to solve various physical problems, including magnetohydrodynamic equations, models for meteorology, and advection-reaction-diffusion equations (see, e.g., \cite{Magnet,reaction,Meteo}). Such problems are typically formulated as abstract evolution equations in a Banach space, in the framework of semigroup theory in a Banach space \cite{Henry,Pazy}.

However, the exponential methods derived in \cite{HO05, HO05e} require the use of periodic or homogeneous boundary conditions, which is a strong limitation for practical purposes. It is well known that applying Runge--Kutta or exponential Runge--Kutta methods to time-integrate a problem with non-homogeneous boundary conditions generally reduces the order of convergence of the methods. In a series of papers \cite{CaM25, CaM24, Ca22, CaM18}, modifications to the exponential Runge--Kutta methods where proposed to alleviate or avoid the order reduction. This work is another contribution in this direction.

To integrate a problem with non-homogeneous boundary conditions, we use a smooth function to extend the boundary data to the interior of the corresponding domain. In \cite{EMO18} different exam\-ples of such extensions are provided in the context of splitting methods. Next, we consider the evolution problem satisfied by the difference of the original solution and the extension, which turns out to be homogeneous. At this point, well-known exponential Runge--Kutta schemes can be applied to the latter,  though additional corrections to the source term of the new equation are still necessary. We discuss how different choices of the extension lead to simpler source terms in the new problem.

We briefly summarize the content of the paper. In Section 2, we examine the integration of linear initial boundary value problems using the proposed strategy. After performing the correction, we construct the schemes using exponential quadrature formulas similar to those of \cite{HO05}. Firstly, we find that the proposed methods have the expected convergence order due to the accuracy of the underlying Runge--Kutta methods. Subsequently, we observe that some quadrature rules with additional properties give rise to schemes with higher convergence orders. We also discuss the feasibility of using them with variable time steps under different hypotheses on the step size sequence.

In Section~3, we propose particular corrections that lead to an increase of the convergence order of linear problems up to classical order. Selecting a correction that satisfies a specific set of boundary conditions allows us to generalize the argument demonstrating an additional increase in the order of convergence when the underlying quadrature rule admits it. In particular, for Gauss collocation methods, this means raising the order from $s$ to order $2s$.

Finally, in Section~4 we address semilinear problems. We demonstrate that our approach yields the same order of convergence as obtained for homogeneous problems (see \cite{HO05}), where the methods fulfil the so-called stiff order conditions that guarantee a certain order of convergence.

Along the corresponding sections, we show many numerical examples that illustrate the theoretical results: both considering different corrections and exponential methods.

\section{Linear problems}\label{sec:lin-prob}

In this section we are concerned with the numerical time integration of linear parabolic boundary value problems of the form
\begin{align}\label{linearproblem}
&\left\{
\begin{aligned}
\partial_t u & = Du + f(t), \\
B u & = b(t), \\
u(0) & = u_0,
\end{aligned}
\right.
\end{align}
with $u = u(t)$ for $t \in [0,T]$. Note that all functions in (\ref{linearproblem}) may depend on a spatial variable $x$ in a domain $\Omega \subset \mathbb{R}^d$, e.g., $u = u(t,x)$. However, we usually suppress this dependence in the notation. We assume that $D$ is a strongly elliptic differential operator with smooth coefficients (for example, the Laplacian), $B$ is a boundary operator representing Dirichlet, Neumann or mixed boundary conditions, $u_0$ is the initial value, $f$ is the time-dependent source term and the function $b$ provides the prescribed boundary values. Note that, in general, $b$ is allowed to depend on time.

Our intention is to reduce the original problem to one with homogeneous boundary conditions. This allows us to formulate the problem as an abstract evolution problem and apply exponential methods. With this idea in mind we denote by $z = z(t,x)$ a smooth function that satisfies the prescribed boundary data
\begin{equation}\label{zbc}
Bz = b.
\end{equation}
There are many efficient ways to obtain such functions. For some possible choices in a similar context, see \cite{EMO18}. One option is the harmonic extension of the boundary data. In this case, $z$ solves the elliptic problem
\begin{equation}\label{elliptic}
Dz = 0, \quad Bz = b.
\end{equation}
Since we are only interested in a function that satisfies the boundary conditions, we can even replace the differential operator $D$ with another appropriate operator (e.g., the Laplacian $\Delta$), for which the problem is simpler or can be solved more efficiently. A second option is using $z$ as a solution of the parabolic evolution problem
\begin{equation}
z_t = Dz, \quad Bz = b.
\end{equation}
Although this approach is more difficult to implement in practice, it leads to simpler corrected equations, as we will show in the following paragraph. An idea for the third strategy is to employ the very efficient spatial filtering technique used for noise reduction (or smoothing) in linear image processing described, for instance, in \cite{EMO18}. This consist of using an iterative procedure of weighted averages to construct a smooth function that satisfies (\ref{zbc}), starting from the boundary values. Finally, we point out that, for time-independent boundary conditions, we can simply set $z_n = u_n$ as the correction, since the numerical solution satisfies the prescribed boundary data.

Having a smooth extension $z$ of the boundary data at hand, we consider $w = u - z$, which is the solution of the equation
\begin{align}
&\left\{
\begin{aligned}
\partial_t w & = Dw + f(t) + k(t), \\
B w & = 0, \\
w(0) & = u_0 - z(0),
\end{aligned}
\right.
\label{linearproblemcorrected}
\end{align}
where $k = Dz - \partial_t z$. For the harmonic extension, this simplifies to $k  = - \partial_t z$.

The previous problem, now with homogeneous boundary conditions, can be formulated as an abstract evolution equation in a Banach space $\left(X, \| \cdot \| \right)$. We denote by $A$ the restriction of $D$ to the vector space $Bu = 0$, whose domain is denoted by $D(A)$. Our basic assumptions on this operator are those of \cite{Henry, Lunardi, Pazy}.

\begin{assumption}\label{ass:sg}\rm
Let $A : D(A) \rightarrow X$ be sectorial, i.e., $A$ is a densely defined and closed operator on $X$ satisfying the resolvent condition
\begin{equation}
\left\| (\lambda I - A)^{-1} \right\| \leq \dfrac{M}{\left|\lambda - a\right|},
\end{equation}
on the sector $\left\lbrace \lambda \in \mathbb{C};\ 0 \leq \left| \text{arg}\left(\lambda - a \right)\right| \leq \theta,\ \lambda \neq a \right\rbrace$ for $M \geq 1$, $a \in \mathbb{R}$, and $\pi/2 < \theta < \pi $.
\end{assumption}

Under this assumption, the operator $A$ is the infinitesimal generator of an analytic semigroup $\lbrace \text{e}^{tA}\rbrace_{t\geq 0}$. For $\omega > a$, the fractional powers of $\widetilde{A} = \omega I - A$ are well-defined. We set, for $ 0 \leq \alpha \leq 1$, the spaces $X_{\alpha} = D(\widetilde{A}^{\alpha})$ endowed with the norms $\|u\|_{\alpha} = \|\widetilde{A}^{\alpha} u \|$. Note that this definition does not depend on $\omega$, since different choices of $\omega$ lead to equivalent norms. Given a function $u: [0,T] \rightarrow X_{\alpha}$, we also define the norm
\begin{equation}
\|u\|_{\infty} = \max_{0 \leq t \leq T} \|u(t)\|_{\alpha}.
\end{equation}
Under Assumption~\ref{ass:sg}, the following stability bound holds uniformly on $0 \leq t \leq T$,
\begin{equation}\label{stability1}
\|\text{e}^{tA} \| + \|t^{\gamma} \widetilde{A}^{\gamma}\text{e}^{tA}\| \leq C, \quad \gamma > 0.
\end{equation}
Employing this framework, the problem (\ref{linearproblemcorrected}) can be written in abstract form
\begin{align}\label{linearproblemasbtract}
&\left\{
\begin{aligned}
w'(t) & = Aw(t) + f(t) + k(t), \\
w(0) & = w_0.
\end{aligned}
\right.
\end{align}
For readers who are unfamiliar with this formulation, we provide two illustrative examples.

\begin{enumerate}
\item[(i)] Let $X = L^p(\Omega)$, $1 \leq p < \infty$, $D$ be the Laplace operator and set Dirichlet boundary conditions, $Bu = u |_{\partial \Omega}$. Then $A$ generates an analytic semigroup of operators in $X \cap \ker B$ whose domain is $D(A) = W^{2,p}(\Omega) \cap W^{1,p}_0(\Omega)$.

\item[(ii)] Let $\Omega = (0,1) \subset \mathbb{R}$ and consider as $D$ the one-dimensional Laplace operator, but now discretized by centered second-order finite differences with $N$ inner grid points and spatial grid size $\Delta x = 1/(N+1)$. For Dirichlet boundary conditions we have $X = \mathbb{R}^N$, and $A$ is represented by the $N\times N$ matrix
\begin{equation}
A = \dfrac{1}{\left(\Delta x\right)^2} \begin{pmatrix}
-2 & 1 &  &  0\\
1 & -2 & 1 &  \\
 & \ddots & \ddots & \ddots \\
0 &  & 1 & -2
\end{pmatrix}.
\end{equation}
In this case, the operators of the semigroup coincide with the matrix exponentials of $tA$.
\end{enumerate}

We use this semigroup framework because is very powerful for proving convergence and includes both abstract PDEs and their spatial discretizations.

To integrate problem \eqref{linearproblemcorrected}, we use an exponential integrator based on an $s$-stage exponential quadrature rule. Such a rule is given by $s$ nodes $c_1, \dots, c_s$ and weights $b_1(\tau A), \dots, b_s(\tau A)$ (see, e.g., \cite{HO05}). We assume that the well-known order conditions for order $s$ to hold:
\begin{equation}\label{orderconditions1}
\frac{1}{(j-1)!}\sum_{i=1}^s b_i(\tau A)c_i^{j-1}  =  \varphi_j(\tau A), \quad j = 1, \ldots, s,
\end{equation}
where $\varphi_j(z)$ for $j\ge 1$ are the functions given by the following recurrence relation
\begin{equation*}
\varphi_j(z) = \dfrac{\varphi_{j-1}(z) - \frac1{(j-1)!}}{z},\qquad \varphi_{0}(z) = \text{e}^z.
\end{equation*}
These so-called $\varphi$ functions are bounded on the spectrum of every $A$ satisfying Assumption~\ref{ass:sg}, so the operators $\varphi(\tau A)$ are well-defined and bounded for $\tau > 0$. Moreover, the following integral representation is valid
\begin{equation}\label{eq:int-repr}
\varphi_j(\tau A) = \dfrac{1}{\tau^j} \int_0^{\tau} \text{e}^{\left(\xi - \tau\right)A}\dfrac{\xi^{j-1}}{\left(j-1\right)!} \, d\xi,
\end{equation}
which will be useful in our analysis. For non-confluent nodes the weights are uniquely determined in terms of the nodes and the $\varphi$ functions, since \eqref{orderconditions1} gives rise to a Vandermonde system for the weights. This shows hat the weight functions are linear combination of the $\varphi$ functions, and due to~\eqref{stability1} and~\eqref{eq:int-repr}, the following stability bounds hold uniformly for $0\le t\le T$
\begin{equation}\label{stability-weights}
\|b_i(tA) \| \leq C, \quad 1\le i \le s.
\end{equation}

For the time integration of \eqref{linearproblemasbtract}, we choose a step size sequence $\left\lbrace \tau_n \right\rbrace_{n=0}^{N-1}$ and define the temporal grid points
\begin{equation*}
t_0 = 0, \quad t_{n+1} = t_n + \tau_n, \ n = 0, \dots, N-1, \quad t_N = T.
\end{equation*}
Then, the numerical approximation $w_{n+1}$ to the exact solution $w(t)$ at time $t=t_{n+1}$ is given by
\begin{equation}\label{wnumlinearscheme}
w_{n+1} = \text{e}^{\tau_n A} w_n + \tau_n \sum_{i=1}^s b_i(\tau_n A ) \bigl(f\left(t_n + \tau_n c_i\right) + k\left(t_n + \tau_n c_i\right)\bigr).
\end{equation}
The latter can be rearranged so as to be written in terms of $u$,
\begin{equation}\label{unumlinearscheme}
u_{n+1} = \text{e}^{\tau_n A} \left(u_n - z(t_n)\right) + z(t_{n+1}) + \tau_n \sum_{i=1}^s b_i\left(\tau_n A \right) \bigl(f\left(t_n + \tau_n c_i\right) + k\left(t_n + \tau_n c_i\right)\bigr).
\end{equation}

Now we are in a position to state our first convergence result.

\begin{theorem} \label{thm:order-s}
Consider the exponential Runge--Kutta method (\ref{unumlinearscheme}) for the numerical solution of~(\ref{linearproblem}). If $f^{(s)} + k^{(s)} \in L^1\left(0,T;X\right)$, then the method converges with order $s$ and the error satisfies
\begin{equation*}
\|u(t_n) - u_n \| \leq C \sum_{j=0}^{n-1} \tau_{j}^s \int_{t_j}^{t_{j+1}} \| f^{(s)}(\tau) + k^{(s)}(\tau) \| \, d\tau
\end{equation*}
for $0 \leq t_n \leq T$. The constant $C$ depends on $T$, but is independent of $n$ and $\left\lbrace \tau_n \right\rbrace_{n=0}^{N-1}$.
\end{theorem}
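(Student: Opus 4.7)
The plan is to reduce the problem to a homogeneous one and then mimic the classical convergence analysis of exponential quadrature rules, as in \cite{HO05}, carefully tracking the integral remainder.

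First, I would observe that the scheme (\ref{unumlinearscheme}) written for $u$ is algebraically equivalent to the exponential quadrature rule (\ref{wnumlinearscheme}) applied to the corrected problem (\ref{linearproblemasbtract}). Indeed, setting $\widetilde{w}_n := u_n - z(t_n)$ and $g(t) := f(t) + k(t)$, identity (\ref{unumlinearscheme}) becomes
\begin{equation*}
\widetilde{w}_{n+1} = \text{e}^{\tau_n A}\widetilde{w}_n + \tau_n \sum_{i=1}^{s} b_i(\tau_n A)\, g(t_n + \tau_n c_i),\qquad \widetilde{w}_0 = w_0,
\end{equation*}
so that $\|u(t_n) - u_n\| = \|w(t_n) - \widetilde{w}_n\|$ and it suffices to estimate the error of the exponential quadrature rule applied to (\ref{linearproblemasbtract}).

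Next, I would derive the local error. By the variation-of-constants formula,
\begin{equation*}
w(t_{n+1}) = \text{e}^{\tau_n A} w(t_n) + \int_0^{\tau_n} \text{e}^{(\tau_n - \xi)A}\, g(t_n + \xi)\, d\xi.
\end{equation*}
I would Taylor-expand $g$ around $t_n$ up to order $s-1$ with integral remainder, and use the integral representation (\ref{eq:int-repr}) to recognise
\begin{equation*}
\int_0^{\tau_n} \text{e}^{(\tau_n - \xi)A}\, \frac{\xi^{j}}{j!}\, d\xi = \tau_n^{j+1} \varphi_{j+1}(\tau_n A),\qquad j = 0,\ldots,s-1.
\end{equation*}
Doing the same Taylor expansion inside the quadrature sum and invoking the order conditions (\ref{orderconditions1}), rewritten as $\tfrac{1}{j!}\sum_i b_i(\tau A) c_i^{j} = \varphi_{j+1}(\tau A)$ for $j = 0,\ldots,s-1$, all polynomial terms cancel and the defect $\delta_{n+1} := w(t_{n+1}) - \text{e}^{\tau_n A}w(t_n) - \tau_n\sum_i b_i(\tau_n A) g(t_n+\tau_n c_i)$ reduces to a difference of two integral remainders, both involving $g^{(s)}$. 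Using the stability bound $\|\text{e}^{tA}\|\le C$ from (\ref{stability1}) and the uniform weight bound (\ref{stability-weights}), and applying Fubini to swap the order of integration in the Taylor remainder, I would obtain the local error estimate
\begin{equation*}
\|\delta_{n+1}\| \leq C\, \tau_n^{s} \int_{t_n}^{t_{n+1}} \|g^{(s)}(\tau)\|\, d\tau.
\end{equation*}

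Finally, I would propagate the local errors to the global error using a Lady Windermere's fan argument. Writing $e_n := w(t_n) - \widetilde{w}_n$, the error recursion $e_{n+1} = \text{e}^{\tau_n A} e_n + \delta_{n+1}$ with $e_0 = 0$ unfolds to $e_n = \sum_{j=0}^{n-1} \text{e}^{(t_n - t_{j+1})A} \delta_{j+1}$. Using once more the uniform bound $\|\text{e}^{tA}\|\le C$ on $[0,T]$ and inserting the local error estimate produces the stated global bound. The main technical obstacle is the defect analysis: one has to check that the two integral remainders (from the continuous solution and from the quadrature) can be combined and estimated by a single bound involving $\int_{t_n}^{t_{n+1}}\|g^{(s)}(\tau)\|d\tau$ rather than a weaker $L^\infty$ bound, since this is what allows the accumulated error to sum to $\int_0^{t_n}\|g^{(s)}\|d\tau$ uniformly in the step sequence $\{\tau_n\}$.
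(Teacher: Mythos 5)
Your proposal is correct and follows essentially the same route as the paper: reduce to the corrected homogeneous problem via $u_n - z(t_n) = w_n$, Taylor-expand both the variation-of-constants formula and the quadrature sum, cancel the polynomial terms with the order conditions \eqref{orderconditions1}, bound the integral remainders in the local error using \eqref{stability1} and \eqref{stability-weights}, and propagate through the error recursion $e_{n+1} = \mathrm{e}^{\tau_n A}e_n + \delta_{n+1}$ with $e_0 = 0$. The only cosmetic difference is that you state the equivalence of the $u$- and $w$-formulations at the outset, whereas the paper invokes $u_n = w_n + z(t_n)$ at the end.
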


\begin{proof}
The proof follows very closely that of Theorem 1 in \cite{HO05}. The main difference is just that we are using here variable step sizes. As this is only a small modification, it would not fully justify to give the proof again. However, as some of the formulas appearing in the proof are also required later, we decided to present the proof.

Expanding the exact solution of (\ref{linearproblemasbtract}) into a Taylor series gives
\begin{align}\label{exactTaylor}
w(t_{n+1}) & = \text{e}^{\tau_n A} w(t_n) + \int_0^{\tau_n} \text{e}^{\left(\tau_n - \xi\right)A} \left(f\left(t_n + \xi\right) + k\left(t_n + \xi\right)\right)\, d\xi \notag  \\
& = \text{e}^{\tau_n A} w(t_n) + \sum_{\ell=0}^{s-1} \varphi_{\ell+1}(\tau_{n}A) \tau_{n}^{\ell+1} \left(f^{(\ell)}\left(t_n\right) + k^{(\ell)}\left(t_n\right)\right) \notag \\
& \qquad + \int_0^{\tau_n}\text{e}^{\left(\tau_n - \xi\right)A} \int_0^{\xi} \dfrac{(\tau_n - \mu)^{s-1}}{\left(s-1\right)!} \left(f^{(s)}\left(t_n + \mu\right) + k^{(s)}\left(t_n + \mu\right)\right) d\mu \, d\xi.
\end{align}
On the other hand, the Taylor series of the numerical solution \eqref{wnumlinearscheme} is given by
\begin{align}\label{numTaylor}
w_{n+1} & = \text{e}^{\tau_n A} w_n + \tau_n \sum_{i=1}^s b_i(\tau_n A) \sum_{\ell = 0}^{s-1} \dfrac{\tau_n^{\ell} c_i^{\ell}}{\ell !} \left(f^{(\ell)}\left(t_n\right) + k^{(\ell)}\left(t_n\right)\right)\notag  \\
& \qquad + \tau_n \sum_{i=1}^s b_i(\tau_n A) \int_0^{c_i \tau_n} \dfrac{(c_i \tau_n - \mu)^{s-1}}{(s-1)!}\left(f^{(s)}\left(t_n + \mu\right) + k^{(s)}\left(t_n + \mu\right)\right) d\mu.
\end{align}
Let $e_n = u(t_n) - u_n$ denote the difference between the exact and the numerical solution. Note that the order conditions \eqref{orderconditions1} guarantee that
\begin{equation}\label{recursion}
e_{n+1} = \text{e}^{\tau_n A}e_n + \delta_{n+1},
\end{equation}
where the local errors $\delta_{n+1}$ are given by
\begin{equation}\label{localerror}
\begin{aligned}
\delta_{n+1} = & \int_0^{\tau_n} \text{e}^{\left(\tau_n - \xi\right)A} \int_0^{\xi} \dfrac{(\tau_n - \mu)^{s-1}}{\left(s-1\right)!}
\left(f^{(s)}(t_n + \mu) + k^{(s)}(t_n + \mu)\right) d\mu \, d\xi \\
& - \tau_n \sum_{i=1}^s b_i(\tau_n A)\int_0^{c_i \tau_n} \dfrac{(c_i \tau_n - \mu)^{s-1}}{(s-1)!}
\left(f^{(s)}(t_n + \mu) + k^{(s)}(t_n + \mu)\right) d\mu.
\end{aligned}
\end{equation}
Using the bounds on the semigroup \eqref{stability1} and the weight functions \eqref{stability-weights} shows that
\begin{equation}
\|\delta_{n+1} \| \leq \tau_{n}^{s}\int_{t_{n}}^{t_{n+1}} \|f^{(s)}(\xi) + k^{(s)}(\xi)\|\,d\xi.
\end{equation}
Since $e_0 = 0$, the recursion (\ref{recursion}) has the solution
\begin{equation*}
e_n = \sum_{j=1}^n \text{e}^{\left(t_n-t_j\right)A} \delta_j.
\end{equation*}
The desired estimate now follows after bounding the latter by
\begin{equation}
\|e_n\| \leq \sum_{j=1}^n \|\text{e}^{\left(t_n-t_j\right)A}\| \   \|\delta_j\|,
\end{equation}
using the stability bound \eqref{stability1} and the fact that $u_n = w_n + z(t_n)$.
\end{proof}

In practice some methods show higher order of convergence. This is due to the fact that Theorem~\ref{thm:order-s} is not optimal for methods whose underlying quadrature rule is of order $s+1$, that is,
\begin{equation}\label{eq:order-s+1}
\sum_{i=1}^s b_i(0) c_i^{j-1} = \dfrac{1}{j},\quad 1\le j \le s+1.
\end{equation}
This is the case, for example, for all Gauss methods and the Radau methods with $s \geq 2$. For the following theorem, we need an additional assumption on the sequence of step lengths. We require that
\begin{equation}\label{stepsizegrow}
\tau_{j} \leq \kappa \tau_{j+1}, \quad 0 \leq j \leq N-1
\end{equation}
for some $\kappa >0$. Quasi-uniform step size sequences, e.g., satisfy this assumption.
\begin{theorem}\label{thm:order-s+1}
Under the hypothesis of Theorem~\ref{thm:order-s} and the additional condition \eqref{eq:order-s+1}, we consider the application of the scheme \eqref{unumlinearscheme} to solve problem \eqref{linearproblem}. If the step size sequence satisfies \eqref{stepsizegrow} and $f^{(s+1)} + k^{(s+1)} \in L^1\left(0,T;X\right)$, then
\begin{align*}
\|u_n - u(t_n)\| & \leq C \sum_{j=0}^{n-1}\tau_j^{s+1}\int_{t_j}^{t_{j+1}} \|f^{(s+1)}(\xi) + k^{(s+1)}(\xi)\| \, d\xi  \notag  \\
& \qquad + C\tau_{n-1}^{s+1} \|f^{(s)} + k^{(s)}\|_{\infty} + C\kappa \sum_{j=0}^{n-2} \tau_j^{s+1} \left(\left(t_n - t_{j+1}\right)^{-1}\tau_j \right)\, \|f^{(s)} + k^{(s)}\|_{\infty}
\end{align*}
is satisfied for $0 \leq t_n \leq T$. The constant $C$ depends on $T$, but is independent of $n$ and $\left\lbrace \tau_n \right\rbrace_{n=0}^{N-1}$.
\end{theorem}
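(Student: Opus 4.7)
The plan is to sharpen the local error analysis from the proof of Theorem~\ref{thm:order-s} by performing one additional Taylor expansion of $f$ and $k$ around $t_n$ in both \eqref{exactTaylor} and \eqref{numTaylor}. After cancellation using the $s$ original order conditions \eqref{orderconditions1}, the local error will split as
\begin{equation*}
\delta_{n+1} = \tau_n^{s+1}\psi(\tau_n A)\bigl(f^{(s)}(t_n)+k^{(s)}(t_n)\bigr) + R_{n+1},\qquad \psi(z) := \varphi_{s+1}(z) - \frac{1}{s!}\sum_{i=1}^{s} b_i(z)\,c_i^{s},
\end{equation*}
with a remainder $R_{n+1}$ involving $f^{(s+1)} + k^{(s+1)}$ and satisfying $\|R_{n+1}\| \le C\,\tau_n^{s+1}\int_{t_n}^{t_{n+1}}\|f^{(s+1)}(\xi)+k^{(s+1)}(\xi)\|\,d\xi$ by \eqref{stability1} and \eqref{stability-weights}. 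Summing the contributions of all $R_j$ through the error recursion \eqref{recursion} produces exactly the first term in the claimed bound.

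The key new ingredient is the extra order condition \eqref{eq:order-s+1} with $j=s+1$, which implies $\psi(0) = \frac{1}{(s+1)!} - \frac{1}{s!(s+1)} = 0$. Since $\psi$ is a linear combination of $\varphi$-functions, hence analytic on the spectrum of $A$ via \eqref{eq:int-repr}, this zero at the origin lets me factor $\psi(z) = z\,\tilde\psi(z)$ with $\tilde\psi$ uniformly bounded on the spectrum, and therefore
\begin{equation*}
\tau_n^{s+1}\psi(\tau_n A) = \tau_n^{s+2}A\,\tilde\psi(\tau_n A).
\end{equation*}
When this principal part is propagated through the solved recursion $e_n = \sum_{j=1}^{n}\text{e}^{(t_n - t_j)A}\delta_j$, the unbounded factor $A$ can be absorbed by the analytic smoothing $\|A\,\text{e}^{tA}\| \le C/t$ (which follows from \eqref{stability1} with $\gamma=1$) as long as $t_n - t_j > 0$.

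For the terms with $j \le n-2$ the smoothing yields a contribution bounded by $C\,\tau_j^{s+2}(t_n - t_{j+1})^{-1}\,\|f^{(s)}(t_j)+k^{(s)}(t_j)\|$, which after invoking \eqref{stepsizegrow} at the delicate extremity $j=n-2$, where $t_n - t_{j+1} = \tau_{n-1}$ and the ratio $\tau_j/(t_n-t_{j+1})$ is controlled precisely by $\kappa$, produces the third term of the claimed estimate. For the final step $j = n-1$ the smoothing is not available, so I would instead bound $\tau_{n-1}^{s+1}\psi(\tau_{n-1}A)$ directly, using that $\psi(\tau_{n-1}A)$ itself is bounded on the spectrum as a combination of bounded $\varphi$-operators and weights, giving the second term $C\tau_{n-1}^{s+1}\|f^{(s)}+k^{(s)}\|_\infty$. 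The main obstacle is the factoring $\psi(z)=z\tilde\psi(z)$ with a uniform bound on the spectrum, and the delicate separation of the last local error from the rest, where the smoothing estimate becomes singular at $t=0$; the bookkeeping with variable step sizes is what makes \eqref{stepsizegrow} unavoidable to absorb the factor $\tau_j/(t_n-t_{j+1})$ near the upper end of the sum.
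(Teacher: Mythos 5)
Your proposal is correct and follows essentially the same route as the paper's proof: the refined local error splitting $\delta_{n+1}=\tau_n^{s+1}\psi_{s+1}(\tau_n A)\bigl(f^{(s)}(t_n)+k^{(s)}(t_n)\bigr)+\widetilde{\delta}_{n+1}$ with the same defect operator, the observation that \eqref{eq:order-s+1} forces $\psi_{s+1}(0)=0$ so that $\psi_{s+1}(\tau A)=\tau A\,\psi_{s+1}^{[1]}(\tau A)$ with $\psi_{s+1}^{[1]}$ bounded, the parabolic smoothing $\|A\mathrm{e}^{tA}\|\le C/t$ for the terms $j\le n-2$, and the separate direct bound for $j=n-1$. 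The only cosmetic difference is your explicit identification of where \eqref{stepsizegrow} enters (the ratio $\tau_j/(t_n-t_{j+1})$ near $j=n-2$), which the paper defers to the remark following the theorem; the argument is otherwise the same.
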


\begin{remark} \rm
Before we prove this theorem, we will discuss what the above estimate means in terms of order of convergence. Note that the following (Riemann) sum is bounded by the corresponding integral,
\begin{align*}
\sum_{j=0}^{n-2} (t_n - t_{j+1})^{-1}\tau_j & \leq \kappa + \kappa\sum_{j=0}^{n-3}(t_n - t_{j+1})^{-1}\tau_{j+1} \\
& \leq \kappa + \kappa \int_0^{t_{n-1}} (t_n - s)^{-1} ds \leq \kappa + M \left| \log \tau_{n-1}\right|,
\end{align*}
where $M$ may depend on $T$, but not on $n$ and $\left\lbrace \tau_n \right\rbrace_{n=0}^{N-1}$. This bound of the sum shows that the method has order $s+1$ up to a logarithmic factor $\left| \log \tau_{n-1}\right|$. To clarify the meaning of the theorem, we detail some typical situations that appear in practice.
\begin{enumerate}

\item[(a)]
$\left\lbrace \tau_n \right\rbrace_{n=0}^{N-1}$ constant, $\tau_j = \tau$. In this case we get for $g=f+k$ the following bound
\begin{equation*}
C\tau_{n-1}^{s+1} \|g^{(s)}\|_{\infty} + C\kappa \sum_{j=0}^{n-2} \tau_j^{s+1} \left(\left(t_n - t_{j+1}\right)^{-1}\tau_j \right)\, \|g^{(s)}\|_{\infty} \leq C\tau^{s+1} \left(1 + \left| \log \tau \right| \right) \|g^{(s)}\|_{\infty},
\end{equation*}
showing the order $s+1$ apart from the logarithmic factor $\left| \log \tau \right|$, which is negligible compared to $\tau^{s+1}$. Actually, in the case of constant step sizes a different proof (see \cite[Thm.~2]{HO05}) shows that the logarithmic factor can be dropped. This is because the summation-by-parts argument can be applied there, whereas it is not applicable in cases involving variable step sizes.

\item[(b)]
 $\left\lbrace \tau_n \right\rbrace_{n=0}^{N-1}$ quasi-uniform, $\alpha \tau \leq \tau_j \leq \tau$. In this case we get for $g=f+k$ the bound
\begin{equation*}
C\tau_{n-1}^{s+1} \|g^{(s)}\|_{\infty} + C\kappa \sum_{j=0}^{n-2} \tau_j^{s+1} \left(\left(t_n - t_{j+1}\right)^{-1}\tau_j \right)\, \|g^{(s)}\|_{\infty} \leq C\tau^{s+1} \left(1 + \left| \log \tau \right| \right) \|g^{(s)}\|_{\infty},
\end{equation*}
with a constant depending on $\alpha$.

\item[(c)]
Under the stronger assumption $\widetilde{A}^{\beta} \left(f^{(s)} + k^{(s)}\right) \in L^{\infty}\left(0,T; X\right)$ for some $\beta>0$, we can drop the logarithmic factor by slightly modifying the proof. Using the bound
\begin{align*}
\left\|\text{e}^{\left(t_n - t_j\right)A} A \left(f^{(s)} + k^{(s)}\right)\right\| & = \left\|\text{e}^{\left(t_n - t_j\right)A} \widetilde{A}^{1-\beta} \left(A \widetilde{A}^{-1}\right)\widetilde{A}^{\beta} \left(f^{(s)} + k^{(s)}\right)\right\| \\
& \leq C \left(t_n - t_j\right)^{\beta-1} \left\|\widetilde{A}^{\beta} \left(f^{(s)} + k^{(s)}\right)\right\|
\end{align*}
and the same reasoning as in part~(a) we get an integrable kernel $\int_0^{t_n} (t_n-\xi)^{\beta-1}d\xi < \infty$. This leads to the bound
\begin{equation*}
C\tau_{n-1}^{s+1} \|g^{(s)}\|_{\infty} + C\kappa \sum_{j=0}^{n-2} \tau_j^{s+1} \left(\left(t_n - t_{j+1}\right)^{\beta-1}\tau_j \right)\, \|g^{(s)}\|_{\infty} \leq C\, \tau^{s+1}\,  \|\widetilde{A}^{\beta} g^{(s)}\|_{\infty},
\end{equation*}
with $\tau = \max_{0 \leq j \leq N-1} \tau_j$.
\end{enumerate}
\end{remark}

\begin{proof}[Proof of Theorem \ref{thm:order-s+1}]
We start our reasoning in the same way as in the proof of Theorem~\ref{thm:order-s}. However, since the order conditions~\eqref{orderconditions1} are only satisfied up to order $s$, we expand the local error~\eqref{localerror} one order further to get
\begin{equation}\label{eq:refined-le}
\delta_{n+1} = \tau_{n}^{s+1} \, \psi_{s+1}(\tau_{n} A)\left(f^{(s)}(t_n) + k^{(s)}(t_n)\right) + \widetilde{\delta}_{n+1},
\end{equation}
where
\begin{equation*}
\|\widetilde{\delta}_{n+1} \| \leq \tau_{n}^{s+1}\int_{t_{n}}^{t_{n+1}} \|f^{(s+1)}(\xi) + k^{(s+1)}(\xi)\|\,d\xi
\end{equation*}
and
\begin{equation*}
\psi_{s+1}\left(\tau_n A\right) = \varphi_{s+1}\left(\tau_n A\right) - \dfrac{1}{s!}\sum_{i=1}^s b_i\left(\tau_n A \right) c_i^s.
\end{equation*}
We can bound the second term in~\eqref{eq:refined-le} by reasoning as in the previous theorem
\begin{equation*}
\sum_{j=0}^{n-1} \|\text{e}^{t_n - t_{j+1}}\| \, \|\widetilde{\delta}_j \| \leq C \sum_{j=0}^{n-1}\tau_j^{s+1}\int_{t_j}^{t_{j+1}} \|f^{(s+1)}(\xi) + k^{(s+1)}(\xi)\| \, d\xi.
\end{equation*}
For the first term in~\eqref{eq:refined-le}, we recall that $f^{(s+1)} \in L^1\left(0,T;X\right)$ implies that $f^{(s)} \in L^{\infty}\left(0,T;X\right)$. Since $\psi_{s+1}\left(\tau A\right)$ is uniformly bounded in $0 \leq \tau \leq T$, the term with index $j = n - 1$ satisfies
\begin{equation*}
\left\| \tau_{n-1}^{s+1} \,\psi_{s+1}(\tau_{n-1} A)  \left(f^{(s)}(t_{n-1}) + k^{(s)}(t_{n-1})\right)\right\| \leq C \tau_{n-1}^{s+1} \bigl\|f^{(s)}+k^{(s)}\bigr\|_{\infty}.
\end{equation*}
For $0 \leq j \leq n-2$, we note that $\psi_{s+1}\left(0\right) = 0$ due to the additional order condition \eqref{orderconditions1}. Thus,
\begin{equation}
\psi_{s+1}(\tau_j A ) = \psi_{s+1}(\tau_j A ) - \psi_{s+1}(0) = \tau_j A\; \psi^{[1]}_{s+1}(\tau_j A )
\end{equation}
for a certain bounded operator $\psi^{[1]}_{s+1}(\tau_j A )$. We conclude the proof by taking this into account in the estimate
\begin{equation*}
\left\| \tau_j A \,\text{e}^{(t_n - t_{j+1})A}\;\psi_{s+1}^{[1]}(\tau_j A) \tau_{j}^{s+1} \left(f^{(s)}(t_{j}) + k^{(s)}(t_j)\right)\right\| \leq C \tau_j^{s+1} \left((t_n - t_{j+1})^{-1} \tau_j\right) \bigl\|f^{(s)}+k^{(s)}\bigr\|_{\infty},
\end{equation*}
where we have also used \eqref{stability1}.
\end{proof}

In practice, methods satisfying the \emph{additional} condition
\begin{equation}
\sum_{i=1}^s b_i(0) c_i^{s+1} = \dfrac{1}{s+2},
\label{order-s+2}
\end{equation}
which means that the underlying quadrature rule is of order $s+2$, show a slightly higher order of convergence whenever the source term has a higher spatial regularity. For the sake of simplicity, from now on we consider only constant step sizes. Variable step sizes, however, can be considered as well by taking into account the arguments used in this section. The following theorem is a direct adaptation of \cite[Thm.~3]{HO05} to our case with non-homogeneous boundary conditions.

\begin{theorem}\label{thm:beta}
Let $0 \leq \beta \leq 1$. Under the hypotheses of Theorem~\ref{thm:order-s+1} and the additional condition~\eqref{order-s+2}, we consider the application of the scheme \eqref{unumlinearscheme} to approximate the solution of \eqref{linearproblem}. If the step size sequence is constant, $f^{(s+2)} + k^{(s+2)} \in L^1\left(0,T;X\right)$ and $\widetilde{A}^{\beta}\left(f^{(s+1)} + k^{(s+1)}\right)\in L^1\left(0,T;X\right)$, then the method converges with order $s+1+\beta$ and the error satisfies the bound
\begin{align*}
\|u_n - u(t_n)\| & \leq C \tau^{s+1+\beta}\left(\left\|\widetilde{A}^{\beta}\bigl(f^{(s)}(0) + k^{(s)}(0)\bigr)\right\| + \int_0^{t_n} \left\|\widetilde{A}^{\beta}\bigl(f^{(s+1)}(\tau) + k^{(s+1)}(\tau)\bigr)\right\|d\tau \right)\\
& \qquad + C \tau^{s+2}\left(\left\|f^{(s+1)}(0) + k^{(s+1)}(0)\right\| + \int_0^{t_n} \left\|f^{(s+2)}(\tau) + k^{(s+2)}(\tau)\right\|d\tau \right) ,
\end{align*}
uniformly on $0\leq t_n \leq T$. The constant $C$ depends on $T$, but is independent of $n$ and $\tau$.
\end{theorem}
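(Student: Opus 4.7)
The plan is to imitate the proof of \cite[Thm.~3]{HO05} in the homogenised setting of~\eqref{linearproblemasbtract}, with the composite source $g:=f+k$ playing the role of the source term. The starting point is to refine the local-error expansion used in the proof of Theorem~\ref{thm:order-s+1} by carrying the Taylor residuals in \eqref{exactTaylor}--\eqref{numTaylor} one order further and invoking the extra condition~\eqref{order-s+2}, which yields
\begin{equation*}
\delta_{n+1} = \tau^{s+1}\,\psi_{s+1}(\tau A)\,g^{(s)}(t_n) + \tau^{s+2}\,\psi_{s+2}(\tau A)\,g^{(s+1)}(t_n) + \widehat{\delta}_{n+1},
\end{equation*}
where $\psi_{s+2}(z) := \varphi_{s+2}(z) - \frac{1}{(s+1)!}\sum_{i=1}^{s} b_i(z)\,c_i^{s+1}$ satisfies $\psi_{s+2}(0)=0$ by~\eqref{order-s+2}, and the new remainder obeys $\|\widehat{\delta}_{n+1}\| \le C\tau^{s+2}\int_{t_n}^{t_{n+1}}\|g^{(s+2)}(\xi)\|\,d\xi$.

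Solving the error recursion $e_n=\sum_{j=0}^{n-1}\text{e}^{(t_n-t_{j+1})A}\delta_{j+1}$ naturally splits the analysis into three contributions. The $\widehat{\delta}$ part is estimated immediately by~\eqref{stability1} and produces the $\tau^{s+2}\int_0^{t_n}\|g^{(s+2)}\|\,d\xi$ piece of the claimed bound. For the $\psi_{s+2}$ part, we factor $\psi_{s+2}(\tau A)=\tau A\,\psi_{s+2}^{[1]}(\tau A)$ with bounded $\psi_{s+2}^{[1]}$, decompose $g^{(s+1)}(t_j)=g^{(s+1)}(0)+\int_0^{t_j}g^{(s+2)}(\xi)\,d\xi$, and apply summation by parts, which is permissible because the step size is constant (cf.\ part~(a) of the remark following Theorem~\ref{thm:order-s+1}). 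This telescoping removes the logarithmic factor that would otherwise accompany $\sum\tau A\,\text{e}^{(t_n-t_{j+1})A}$ and yields the $\tau^{s+2}\bigl(\|g^{(s+1)}(0)\|+\int_0^{t_n}\|g^{(s+2)}\|\bigr)$ contribution.

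The dominant piece $\tau^{s+1}\sum_{j=0}^{n-1}\text{e}^{(t_n-t_{j+1})A}\,\psi_{s+1}(\tau A)\,g^{(s)}(t_j)$ is where the fractional rate $\tau^{s+1+\beta}$ originates. We insert $\widetilde{A}^{-\beta}\widetilde{A}^{\beta}$ next to $g^{(s)}(t_j)$, split $g^{(s)}(t_j)=g^{(s)}(0)+\int_0^{t_j}g^{(s+1)}(\xi)\,d\xi$, and use the identities $\psi_{s+1}(\tau A)=\tau A\,\psi_{s+1}^{[1]}(\tau A)$ (from~\eqref{eq:order-s+1}) and $\widetilde{A}^{-\beta}A=\widetilde{A}^{1-\beta}(A\widetilde{A}^{-1})$ to rewrite each summand so that the unbounded factor $\widetilde{A}^{1-\beta}$ acts only against $\text{e}^{(t_n-t_{j+1})A}$. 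The fractional smoothing bound $\|\widetilde{A}^{1-\beta}\text{e}^{sA}\|\le C s^{\beta-1}$ coming from~\eqref{stability1} then turns the summation into a Riemann approximation of the integrable kernel $\int_0^{t_n}(t_n-\xi)^{\beta-1}\,d\xi$, producing exactly the $\tau^{s+1+\beta}\bigl(\|\widetilde{A}^{\beta}g^{(s)}(0)\|+\int_0^{t_n}\|\widetilde{A}^{\beta}g^{(s+1)}\|\bigr)$ piece of the bound.

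The main technical obstacle will be this last step. On the one hand, the fractional functional-calculus bookkeeping has to be arranged so that $\widetilde{A}^{1-\beta}$ always meets the smoothing semigroup and never lands on the data directly. On the other hand, the endpoint $j=n-1$, where the kernel $(t_n-t_{j+1})^{\beta-1}$ is singular, must be controlled separately via the uniform bound on $\tau^{-\beta}\psi_{s+1}(\tau A)\widetilde{A}^{-\beta}$ obtained from the integral representation~\eqref{eq:int-repr}, contributing $\tau^{s+1+\beta}\|\widetilde{A}^{\beta}g^{(s)}(0)\|$ directly. Once these bounds are in place, the limiting cases behave as expected: $\beta=1$ recovers full order $s+2$, and intermediate $\beta$ interpolates between orders $s+1$ and $s+2$.
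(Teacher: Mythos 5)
First, note that the paper's own proof is a two-line reduction: for constant step sizes the corrected scheme \eqref{wnumlinearscheme} is exactly the exponential quadrature rule of \cite{HO05} applied to the homogeneous problem \eqref{linearproblemasbtract} with source $g=f+k$, and since $u_n-u(t_n)=w_n-w(t_n)$ the statement is \cite[Thm.~3]{HO05} verbatim. You instead re-derive that theorem, which is legitimate in principle, and your refined local-error expansion, your treatment of the remainder $\widehat{\delta}_{n+1}$, and your handling of the $\psi_{s+2}$ contribution are all sound. The gap is in the dominant $\psi_{s+1}$ term, which is precisely where the fractional rate $\tau^{s+1+\beta}$ has to be produced, and there the mechanism you describe does not deliver it.

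Count the powers of $\tau$. Writing $\psi_{s+1}(\tau A)=\tau A\,\psi_{s+1}^{[1]}(\tau A)$ and letting $\widetilde{A}^{1-\beta}$ act on the semigroup bounds each summand with $0\le j\le n-2$ by $C\tau^{s+2}(t_n-t_{j+1})^{\beta-1}\bigl\|\widetilde{A}^{\beta}g^{(s)}\bigr\|_{\infty}$. Since $\sum_{j=0}^{n-2}(t_n-t_{j+1})^{\beta-1}=\tau^{\beta-1}\sum_{k=1}^{n-1}k^{\beta-1}\le \beta^{-1}t_n^{\beta}\tau^{-1}$, the whole sum is $O(\tau^{s+1})$, not $O(\tau^{s+1+\beta})$: you have reproduced part~(c) of the remark following Theorem~\ref{thm:order-s+1} (order $s+1$ with the logarithm removed), not the assertion of Theorem~\ref{thm:beta}. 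The smoothing bound $\|\widetilde{A}^{1-\beta}\mathrm{e}^{tA}\|\le Ct^{\beta-1}$ can only trade the spare power of $\tau$ for summability of the kernel; it cannot supply both the convergent sum and the extra $\tau^{\beta}$. The $\tau^{\beta}$ must come from a local operator bound of the type $\|\psi_{s+1}(\tau A)\widetilde{A}^{-\beta}\|\le C\tau^{\beta}$ (which you invoke, but only for the single endpoint $j=n-1$); applied termwise to all $n$ summands it loses a factor $n\sim\tau^{-1}$, so it must be combined with the constant-step summation-by-parts machinery of \cite[Thms.~2, 3]{HO05}: split $g^{(s)}(t_j)=g^{(s)}(0)+\int_0^{t_j}g^{(s+1)}$, write the geometric partial sums in closed form as $(\mathrm{e}^{m\tau A}-I)(\mathrm{e}^{\tau A}-I)^{-1}$, and bound these composed with $\psi_{s+1}(\tau A)\widetilde{A}^{-\beta}$ by $C\tau^{\beta}$. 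In your proposal the Abel summation is reserved for the $\psi_{s+2}$ term, where it is the least needed; it is indispensable for the $\psi_{s+1}$ term. (A minor further slip: the endpoint $j=n-1$ involves $g^{(s)}(t_{n-1})$, not $g^{(s)}(0)$, so it too must be fed through the decomposition $g^{(s)}(t_{n-1})=g^{(s)}(0)+\int_0^{t_{n-1}}g^{(s+1)}$ to match the stated bound.)
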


\begin{proof}
Since we are considering constant step sizes, it suffices to apply \cite[Thm.~3]{HO05} to \eqref{wnumlinearscheme} and take into account $u_n - u(t_n) = w_n - w(t_n)$.
\end{proof}

\section{More corrections and some numerical illustrations}\label{sec:more-corrections}

In this section, we present a strategy for improving the order of convergence by using more sophisticated corrections. For simplicity we only consider constant step sizes. However, the arguments presented in Section~\ref{sec:lin-prob} can be applied to consider variable step sizes as well.

Note that the order reduction in Theorems~\ref{thm:order-s+1} and~\ref{thm:beta} occurred primarily because certain time derivatives of the source term $g = f + k$ did not satisfy homogeneous boundary conditions. This motivates us to impose the (slightly stronger) condition $Bg=0$. A straightforward calculation shows that
\begin{equation*}
Bg = B(f + k) = Bf + BDz - B\partial_t z = Bf + BDz -b'.
\end{equation*}
Thus, $Bg=0$ is satisfied if $BDz = -Bf + b'$. To fulfil the latter condition, we consider a smooth function $z^{[1]}$ satisfying
\begin{equation}\label{z11}
Bz^{[1]}(t) = - Bf(t) + b'(t)
\end{equation}
and then employ the correction given by
\begin{equation}\label{z12}
Dz(t) = z^{[1]}(t), \quad Bz = b(t).
\end{equation}
Using this correction improves the order of convergence.

\begin{theorem}\label{thm:order-s+2}
Under the hypotheses of Theorem~2 and the additional condition \eqref{order-s+2}, we consider the application of the scheme \eqref{unumlinearscheme} with the correction \eqref{z11}, \eqref{z12} to approximate the solution of~\eqref{linearproblem}. Further assume that $f^{(s+2)} + k^{(s+2)} \in L^1\left(0,T;X\right)$ and $D\bigl(f^{(s+1)} + k^{(s+1)}\bigr)\in L^1(0,T;X)$. Then the method converges with order $s+2$ and the error satisfies the bound
\begin{align*}
\|u_n - u(t_n)\| & \leq C \tau^{s+2}\left(\left\|A\bigl(f^{(s)}(0) + k^{(s)}(0)\bigr)\right\| + \int_0^{t_n} \left\|A\bigl(f^{(s+1)}(\tau) + k^{(s+1)}(\tau)\bigr)\right\|d\tau \right. \notag  \\
& \qquad + \left. \left\|f^{(s+1)}(0) + k^{(s+1)}(0)\right\| + \int_0^{t_n} \left\|f^{(s+2)}(\tau) + k^{(s+2)}(\tau)\right\|d\tau \right) \notag ,
\end{align*}
uniformly on $0\leq t_n \leq T$. The constant $C$ depends on $T$, but is independent of $n$ and $\tau$.
\end{theorem}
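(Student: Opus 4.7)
The plan is to reduce this statement to Theorem~\ref{thm:beta} in the limiting case $\beta=1$. The key observation is that the particular choice of correction \eqref{z11}, \eqref{z12} forces $g(t) := f(t) + k(t)$, together with all of its time derivatives, to lie in $D(A)$ for every $t\in[0,T]$. Once this is established, the norms $\|\widetilde{A}^{\beta} g^{(j)}\|$ that enter the right-hand side of Theorem~\ref{thm:beta} are legitimately defined for $\beta=1$, and the resulting bound is of order $s+1+\beta = s+2$.

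First I would verify that $B g^{(j)} \equiv 0$. Recalling $k = Dz - \partial_t z$, the boundary identity $Bz^{[1]} = -Bf + b'$ combined with $Dz = z^{[1]}$ gives
\begin{equation*}
Bg = Bf + BDz - B\partial_t z = Bf + Bz^{[1]} - b' = 0.
\end{equation*}
Assuming enough joint time-space regularity of $f$, $b$, $z$ and $z^{[1]}$ so that $B$ and $D$ commute with $\partial_t$, differentiating this identity $j$ times in $t$ yields $Bg^{(j)}(t) = 0$ for every $j\ge 0$ and every $t$. Consequently each $g^{(j)}(t)$ lies in $D(A)$, and $A g^{(j)}(t) = D g^{(j)}(t)$.

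Next I would invoke Theorem~\ref{thm:beta} with $\beta = 1$, which predicts the order $s+1+\beta = s+2$. Since $\widetilde{A} = \omega I - A$ and $g^{(j)}(t)\in D(A)$, one has
\begin{equation*}
\|\widetilde{A}\, g^{(j)}\| \le \omega\,\|g^{(j)}\| + \|A g^{(j)}\| = \omega\,\|g^{(j)}\| + \|D g^{(j)}\|,
\end{equation*}
so the abstract hypothesis $\widetilde{A}(f^{(s+1)}+k^{(s+1)}) \in L^1(0,T;X)$ reduces to the stated assumption $D(f^{(s+1)}+k^{(s+1)}) \in L^1(0,T;X)$, modulo the lower-order norms $\|g^{(j)}\|$. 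The latter are controlled by $\|g^{(j)}(0)\| + \int_0^{t_n}\|g^{(j+1)}\|\,d\tau$ and hence are already absorbed into the $\tau^{s+2}$ part of the bound supplied by Theorem~\ref{thm:beta}. Collecting all terms yields exactly the estimate claimed in Theorem~\ref{thm:order-s+2}.

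The main technical point will be the pointwise-in-time identity $Bg^{(j)} = 0$: this requires sufficient regularity and compatibility of the extensions $z$ and $z^{[1]}$ with the data, together with the ability to commute $B$ and $D$ with $\partial_t$. Once this is in place, no new analysis is needed and the proof is a direct specialization of Theorem~\ref{thm:beta} to the particular correction \eqref{z11}, \eqref{z12}.
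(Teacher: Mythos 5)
Your proposal is correct and follows essentially the same route as the paper: establish $Bg=0$ from the correction \eqref{z11}--\eqref{z12}, differentiate in time, combine with the hypothesis $D\bigl(f^{(s+1)}+k^{(s+1)}\bigr)\in L^1(0,T;X)$ to conclude $g^{(s+1)}\in D(A)$ with $Ag^{(s+1)}\in L^1(0,T;X)$, and then invoke Theorem~\ref{thm:beta} (i.e., \cite[Thm.~3]{HO05}) with $\beta=1$. The only caveat is that membership in $D(A)$ requires the spatial regularity $Dg^{(j)}\in X$ in addition to $Bg^{(j)}=0$, so your blanket claim that \emph{every} $g^{(j)}$ lies in $D(A)$ should be restricted to the derivatives actually covered by the hypotheses, as the paper does.
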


\begin{proof}
Taking derivatives with respect to time in $Bg=0$ and noting that $D g^{(s+1)} \in  L^1(0,T;X)$ we deduce that $g^{(s+1)} \in D(A)$ and therefore that $Ag^{(s+1)} \in L^1(0,T;X)$. Now, the application of \cite[Thm.~3]{HO05} concludes the proof.
\end{proof}

Further corrections can be made in this spirit to achieve higher temporal order of convergence. The remark following Theorem~3 in \cite{HO05} asserts that the condition $\beta \leq 1$ in this theorem is just made for simplicity, and that the order may be improved when additional conditions are met. In particular, full (classical) order is achieved with a sufficiently smooth source term and periodic boundary conditions.

With regard to our scheme, we assume the order conditions
\begin{equation}\label{eq:class-oc}
\sum_{i=1}^s b_i(0) c_i^{s+l} = \dfrac{1}{s+l+1},\quad 0\le l\le m-1,
\end{equation}
which are satisfied, e.g., for Gauss methods with $m\le s$, and for Radau methods with $m\le s-1$. Further, let $z^{[m]}$ be a smooth function satisfying
\begin{equation}\label{eq:cond-highest}
Bz^{[m]}(t) = B\bigl(-D^{m-1}f(t)\bigr) + b'_{m-1}(t),
\end{equation}
and define recursively the functions $z^{[l]}$, $l = m-1, \ldots, 1$,
\begin{equation}\label{eq:cond-recursion}
Dz^{[l]}(t) = z^{(l+1)}(t), \quad Bz^{[l]}(t) = B\bigl(-D^{l-1}f(t)\bigr) + b'_{l-1}(t) =: b_l(t),
\end{equation}
and finally $z$ by
\begin{equation}\label{eq:cond-lowest}
Dz = z^{[1]}, \quad Bz = b(t)=:b_0(t).
\end{equation}
These conditions guarantee that the functions $A^{l-1}(f + k)$ for $1\le l\le m$ satisfy homogeneous boundary conditions.

\begin{proposition}\label{prob:superconvergence}
For sufficiently often differentiable data $f$ and $b$, the conditions \eqref{eq:cond-highest}--\eqref{eq:cond-lowest} imply that
\begin{equation}\label{eq:compatibility}
B A^{l-1}(f + k) = 0, \quad 1\le l\le m.
\end{equation}
Together with \eqref{eq:class-oc} this is a sufficient condition for the integrator to achieve order $s+m+1$.
\end{proposition}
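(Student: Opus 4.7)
The plan is to split the argument into two pieces. First, verify the compatibility identities \eqref{eq:compatibility} directly from the recursive construction of the $z^{[l]}$. Second, using these identities together with the extended order conditions \eqref{eq:class-oc}, extend the convergence analysis of Theorems~\ref{thm:order-s+1} and~\ref{thm:order-s+2} along the lines announced in the remark after \cite[Thm.~3]{HO05} to achieve order $s+m+1$.

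For the first piece I would set $z^{[0]} := z$ and $b_0 := b$, so that \eqref{eq:cond-highest}--\eqref{eq:cond-lowest} read uniformly $Dz^{[l-1]} = z^{[l]}$ and $Bz^{[l]} = b_l = -BD^{l-1}f + b'_{l-1}$ for $1 \le l \le m$. Writing $k = Dz - \partial_t z$, commuting $D$ and $\partial_t$, and iterating gives the identity
\begin{equation*}
D^{l-1}(f+k) = D^{l-1}f + z^{[l]} - \partial_t z^{[l-1]}, \qquad 1 \le l \le m.
\end{equation*}
Applying $B$, exchanging $B$ with $\partial_t$ in the last summand, and inserting $Bz^{[l]} = -BD^{l-1}f + b'_{l-1}$, all terms cancel, so $BD^{j}(f+k) = 0$ for $0 \le j \le m-1$. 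Since $A$ is simply the restriction of $D$ to $\ker B$, these inclusions promote $D^{l-1}(f+k)$ to $A^{l-1}(f+k)$, yielding \eqref{eq:compatibility}. Differentiating in $t$, which is legitimate by the smoothness of $f$ and $b$, extends the same identities to the time derivatives $g^{(j)} := f^{(j)} + k^{(j)}$.

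For the convergence piece I would continue the Taylor expansion of the local error \eqref{eq:refined-le} by $m-1$ further terms, obtaining
\begin{equation*}
\delta_{n+1} = \sum_{l=0}^{m-1} \tau^{s+1+l}\, \psi_{s+1+l}(\tau A)\, g^{(s+l)}(t_n) + \widetilde\delta_{n+1},
\end{equation*}
with a remainder of size $O(\tau^{s+m+1})$ per step. The extended order conditions \eqref{eq:class-oc} force $\psi_{s+1+l}(0) = 0$ for $0 \le l \le m-1$, so each $\psi_{s+1+l}(\tau A)$ factors as $\tau A\, \chi_l(\tau A)$ with $\chi_l$ uniformly bounded. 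Substituting into the accumulated error $e_n = \sum_j \text{e}^{(t_n - t_{j+1})A}\delta_{j+1}$ and separating the diagonal term $j = n-1$ from the off-diagonal sum, as in the proof of Theorem~\ref{thm:order-s+1}, the compatibility \eqref{eq:compatibility} provides $A^{l} g^{(s+l)} \in L^\infty(0,T;X)$ and therefore allows the extracted $A$'s to be distributed between the derivatives $g^{(s+l)}$ and the semigroups $\text{e}^{(t_n - t_{j+1})A}$ using the smoothing bound $\|\text{e}^{tA}\widetilde{A}^{\gamma}\| \le C t^{-\gamma}$ of \eqref{stability1}. A discrete convolution estimate on the resulting integrable kernels, along the lines of \cite[Thm.~3]{HO05}, then yields the claimed order $s+m+1$.

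The main obstacle is the convergence piece: each $\psi_{s+1+l}$ vanishes only to first order at the origin, so the $m$ powers of $A$ required for order $s+m+1$ cannot all be pulled out of the quadrature functions themselves. They must instead be split between $g^{(s+l)}$ (using the compatibility identities just proved and their time derivatives) and the parabolic semigroups (using their smoothing property). The resulting discrete convolution analysis, which is the genuine extension beyond the $\beta \le 1$ case treated explicitly in \cite[Thm.~3]{HO05}, is the technical heart of the argument.
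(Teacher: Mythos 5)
Your verification of the compatibility identities \eqref{eq:compatibility} is correct and is, in substance, the paper's own proof. The paper organizes the computation as an induction on $l$: the identities for smaller $l$ are used to replace $A^{l-1}(f+k)$ by $D^{l-1}(f+k)$, after which $BD^{l-1}(f+k)=BD^{l-1}f+BD^{l}z-BD^{l-1}z'$ is cancelled via \eqref{eq:cond-highest}--\eqref{eq:cond-recursion} and $D^{l}z=z^{[l]}$. You instead write the telescoped identity $D^{l-1}(f+k)=D^{l-1}f+z^{[l]}-\partial_t z^{[l-1]}$ explicitly, but your ``promotion'' of $D^{l-1}$ to $A^{l-1}$ is exactly that induction (you need $BD^{j}(f+k)=0$ for all $j<l-1$, plus spatial regularity, before $A^{l-1}(f+k)$ is even defined), so the two arguments coincide.

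For the second assertion the paper offers no proof: it relies on the statement, quoted in the paragraph preceding the proposition, that the restriction $\beta\le1$ in \cite[Thm.~3]{HO05} is made only for simplicity. Your attempt to supply the missing argument has a genuine gap. First, a bookkeeping issue: a per-step remainder of size $O(\tau^{s+m+1})$ accumulates only to a global error of $O(\tau^{s+m})$; the remainder must have the one-step-integral form $\tau^{s+m+1}\int_{t_n}^{t_{n+1}}\|g^{(s+m+1)}\|\,d\xi$, which forces one further explicit term in the expansion. More seriously, the mechanism you propose cannot close the gap for the low-index terms: each $\psi_{s+1+l}$ supplies exactly one factor $\tau A$, and whether you place the resulting $A$ on $g^{(s+l)}$ (no gain in $\tau$) or on the semigroup (a non-integrable kernel $(t_n-t_{j+1})^{-1}$, hence at best one extra order up to a logarithm), the term $\tau^{s+1}\psi_{s+1}(\tau A)g^{(s)}(t_j)$ summed over $j$ remains of size $O(\tau^{s+1})$, i.e., $m$ orders short of the claim. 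The additional powers of $\tau$ in \cite[Thm.~3]{HO05} come from summation by parts: each Abel summation trades a divided difference of $g$ (one order of $\tau$) against a factor $(\mathrm{e}^{\tau A}-I)^{-1}$, which must be neutralized by a factor $\tau A$ landing on a bounded object --- the first such factor coming from $\psi_{s+1+l}(0)=0$, the subsequent ones from the boundedness of $A^{r}g^{(s+j)}$ for $r\le m$, which is precisely what \eqref{eq:compatibility} provides. Iterating this pairing is the genuine extension of \cite[Thm.~3]{HO05} beyond $\beta\le 1$; your sketch names neither summation by parts nor this mechanism, so you must either cite the generalization as the paper does or actually carry it out.
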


\begin{proof}
The proof is carried out by induction. The case $m=1$ is proven in the paragraph preceding Theorem~\ref{thm:order-s+2}. For the general case, we assume that \eqref{eq:compatibility} holds for $1\le l\le m-1$. This implies that
$$
A^{m-1}(f+k) = D^{m-1}(f+k)
$$
for sufficiently often differentiable data. Therefore, we have
$$
B A^{m-1}(f + k) = B D^{m-1}(f + k) = B D^{m-1}f + B D^m z - B D^{m-1}z'.
$$
Now, the result follows from \eqref{eq:cond-highest} and \eqref{eq:cond-recursion} by using $D^l z = z^{[l]}$ for $l\ge 1$.
\end{proof}

In the remainder of this section, we will illustrate our theoretical considerations with some numerical experiments. We consider two problems of the form \eqref{linearproblem} with $\Omega = (0,1)$. This allows us to use fast Fourier techniques to compute the action of the operators $\varphi_j(\tau A)$ on vectors. We measure the errors pointwise in time using discrete versions of the $L^1$, $L^2$, and $L^{\infty}$ norms in space.  For example, given an equidistant discretization of $\Omega = (0,1)$ with mesh width $\Delta x = 1/(N+1)$, we use
\begin{equation*}
\|u\|_{2,\Delta x} = \textstyle\sqrt{\Delta x\sum_{i=1}^N |u_i|^2}
\end{equation*}
in the case of Dirichlet boundary conditions. For other norms or boundary conditions, this must be adapted accordingly.

\begin{example} \label{ex:e1}
Consider the following one-dimensional linear parabolic problem with time dependent Dirichlet boundary conditions:
\begin{equation}\label{eq:ex1}
\begin{alignedat}{2}
u_t & =  u_{xx} + \left(x^2 + x - 3\right)\text{e}^t, &&0 < x < 1, \quad 0 < t \leq 1,\\
u(0,x)  & =   x^2 + x, &&0 \leq x \leq 1, \\
u(t,0) & =  1 - \text{e}^t, \quad u(t,1) = 1 + \text{e}^t, \qquad\quad &&0 \leq t \leq 1.
\end{alignedat}
\end{equation}
We integrate this problem with the exponential quadrature rule \eqref{unumlinearscheme} based on the Gauss nodes with $s=2$ stages. The order conditions \eqref{orderconditions1} define such a method in a unique way after fixing the nodes. Moreover, the considered nodes satisfy the weak quadrature order conditions \eqref{eq:class-oc} up to classical order $m=4$.

We use the simple correction function $z(t,x) = 1 + (2x-1)\,\text{e}^t$ that satisfies the boundary conditions as required by Theorem~\ref{thm:beta}. A grid with $N = 512$ points and standard second order finite differences are used for the spatial discretization. To compute the errors, the exact solution $u(t,x) = 1 + (x^2 + x - 1)\, \text{e}^t$ is used. Due to the characterization of the domains of fractional powers of elliptic operators (see, e.g., \cite{Lofs}) the expected order of convergence is $3 + \beta$ with $\beta = \frac1{2p}$ for the norm $L^{p}$, $1\leq p \leq \infty$. The numerical results, collected in Table~\ref{tab:linear-beta1}, are in line with our theoretical results.

\begin{table}[ht]
\def\arraystretch{1.1}
\caption{Numerical results obtained by integrating \eqref{eq:ex1} with the exponential quadrature rule \eqref{unumlinearscheme} based on the Gauss nodes with $s=2$ stages. The error is measured at time $t=1$. \label{tab:linear-beta1}}
\begin{center}
\vspace{-4mm}
\small
\begin{tabular}{rrrrrrrrrr}
\multicolumn{1}{l}{ step size} & & \multicolumn{1}{l}{$L^{1}$ error\rule{0pt}{2.5ex}} & \multicolumn{1}{l}{order} & &
\multicolumn{1}{l}{$L^{2}$ error\rule{0pt}{2.5ex}} & \multicolumn{1}{l}{order} & &
\multicolumn{1}{l}{$L^{\infty}$ error\rule{0pt}{2.5ex}} & \multicolumn{1}{l}{order}\\
\hline
\rule{0pt}{11pt}
1.000e-01  & & 9.130e-06  & --         & & 1.036e-05  & --     & & 1.609e-04  & --          \\
5.000e-02  & & 8.385e-07  & 3.44       & & 1.117e-06  & 3.21   & & 2.059e-06  & 2.97       \\
2.500e-02  & & 7.427e-08  & 3.50       & & 1.171e-07  & 3.25   & & 2.574e-07  & 3.00       \\
1.250e-02  & & 6.619e-09  & 3.49       & & 1.229e-08  & 3.25   & & 3.220e-08  & 3.00        \\
6.250e-03  & & 5.866e-10  & 3.50       & & 1.292e-09  & 3.25   & & 4.025e-09  & 3.00        \\
\end{tabular}
\vspace{-5mm}
\end{center}
\end{table}

\end{example}

\begin{example}\label{ex:e2}
We consider the following one-dimensional linear parabolic problem with time invariant Dirichlet boundary conditions:
\begin{equation}\label{eq:ex2}
\begin{alignedat}{2}
u_t & =  u_{xx} + \left(-2 +12x -12x^2 + x^2(1-x)^2\right)\text{e}^t, \qquad &&0 < x < 1, \quad 0 < t \leq 1,\\
u(0,x)  & =   1 + x + x^2(1-x)^2, &&0 \leq x \leq 1, \\
u(t,0) & =  1, \quad u(t,1) = 2 , &&0 \leq t \leq 1,
\end{alignedat}
\end{equation}
We integrate this problem again with the exponential quadrature rule \eqref{unumlinearscheme} based on the Gauss nodes with $s=2$ stages. We consider two corrections $z$ with different behaviours. In both cases, we use a grid with $N = 512$ points, and standard second order finite differences are used for the spatial discretization. The errors are computed with respect to a reference solution obtained with the same numerical method using a very small time step $\tau = \frac1{4000}$.

We start with the simple correction function $z(t,x) = 1 + x$ that satisfies the boundary conditions, as required by Theorem~\ref{thm:beta}.  Due to the characterization of the domains of fractional powers of elliptic operators (see, e.g., \cite{Lofs}) the expected order of convergence is $3 + \beta$ with $\beta = \frac1{2p}$ for the norm $L^{p}$, $1\leq p \leq \infty$. The numerical results, collected in Table~\ref{tab:linear-beta}, are in line with our theoretical results.

\begin{table}[ht]
\def\arraystretch{1.1}
\caption{Numerical results obtained when integrating \eqref{eq:ex1} with the exponential quadrature rule \eqref{unumlinearscheme} based on the Gauss nodes with $s=2$ stages and the linear correction. The error is measured at time $t=1$.\label{tab:linear-beta}}
\begin{center}
\vspace{-4mm}
\small
\begin{tabular}{rrrrrrrrrr}
\multicolumn{1}{l}{ step size} & & \multicolumn{1}{l}{$L^{1}$ error\rule{0pt}{2.5ex}} & \multicolumn{1}{l}{order} & &
\multicolumn{1}{l}{$L^{2}$ error\rule{0pt}{2.5ex}} & \multicolumn{1}{l}{order} & &
\multicolumn{1}{l}{$L^{\infty}$ error\rule{0pt}{2.5ex}} & \multicolumn{1}{l}{order}\\
\hline
\rule{0pt}{11pt}
5.000e-02 & & 7.850e-07  & --         & & 9.201e-07  & --     & & 1.697e-06  & --        \\
2.500e-02  & & 7.940e-08  & 3.31       & & 1.054e-07  & 3.13   & & 2.344e-07  & 2.86     \\
1.250e-02  & & 7.373e-09  & 3.43       & & 1.163e-08  & 3.18   & & 3.075e-08  & 2.93     \\
6.250e-03 & & 6.554e-10  & 3.49       & & 1.254e-09  & 3.21   & & 3.936e-09  & 2.97      \\
3.125e-03  & & 5.707e-11  & 3.52       & & 1.334e-10  & 3.23   & & 4.963e-10  & 2.99      \\
\end{tabular}
\vspace{-5mm}
\end{center}
\end{table}

Next, we use the correction function $z(t,x) = 1 + (1-\text{e}^t)\, x + \text{e}^t\, x^2$ that satisfies the conditions required by Theorem~\ref{thm:order-s+2}. In this case, we expect convergence of order~$4$. The numerical results, given in Table~\ref{tab:linear-o4}, clearly confirm the expected order of convergence.

\begin{table}[ht]
\def\arraystretch{1.1}
\caption{Numerical results obtained by integrating \eqref{eq:ex2} with the exponential quadrature rule \eqref{unumlinearscheme} based on the Gauss nodes with $s=2$ stages and the quadratic correction. The error is measured at time $t=1$. \label{tab:linear-o4}}
\begin{center}
\vspace{-4mm}
\small
\begin{tabular}{rrrrrrrrrr}
\multicolumn{1}{l}{ step size} & & \multicolumn{1}{l}{$L^{1}$ error\rule{0pt}{2.5ex}} & \multicolumn{1}{l}{order} & &
\multicolumn{1}{l}{$L^{2}$ error\rule{0pt}{2.5ex}} & \multicolumn{1}{l}{order} & &
\multicolumn{1}{l}{$L^{\infty}$ error\rule{0pt}{2.5ex}} & \multicolumn{1}{l}{order}\\
\hline
\rule{0pt}{11pt}
5.000e-02 & & 5.292e-07  & --         & & 5.549e-07  & --     & & 6.407e-07  & --          \\
2.500e-02  & & 3.443e-08  & 3.94       & & 3.557e-08  & 3.96   & & 3.966e-08  & 4.01       \\
1.250e-02  & & 2.210e-09  & 3.96       & & 2.260e-09  & 3.98   & & 2.449e-09  & 4.02       \\
6.250e-03 & & 1.406e-10  & 3.97      & & 1.428e-10  & 3.98  & & 1.515e-10  & 4.01     \\
3.125e-03  & & 8.857e-12  & 3.99       & & 8.948e-12  & 4.00   & & 9.377e-12  & 4.01        \\
\end{tabular}
\vspace{-5mm}
\end{center}
\end{table}

\end{example}

\section{Semilinear problems}\label{sec:semilinear}

In this section we propose a strategy to approximate the solution of semilinear parabolic problems
\begin{align}\label{nonlinearproblem}
&\left\{
\begin{aligned}
\partial_t u & = Du + f(t,u), \\
B u & = b(t), \\
u(0) & = u_0.
\end{aligned}
\right.
\end{align}
We work with the same assumptions as in Section 2, but here the source term $f$ is also allowed to depend on the solution $u$. We first state our main assumption on the nonlinearity $f$.

\begin{assumption}\label{ass:lip}\rm
 Let $f : [0,T] \times X_{\alpha} \rightarrow X$ be locally Lipschitz. Thus there exists a real number $L(R,T)$ such that
\begin{equation}
\|f(t,u) - f(t,v)\| \leq L \|u - v\|_{\alpha},
\end{equation}
for all $t \in [0,T]$ and all $u$, $v$ with $\max \left( \|u\|_{\alpha}, \|v\|_{\alpha} \right) < R$.
\end{assumption}

For methods with convergence order higher than $p = 1$, we have to assume more regularity.

\begin{assumption}\label{ass:reg}\rm
We suppose that (\ref{nonlinearproblem}) possesses a sufficiently smooth solution $u : [0,T] \rightarrow X_{\alpha}$ with derivatives in $X_{\alpha}$, and that $f : [0,T] \times X_{\alpha} \rightarrow X$ is sufficiently often Fr\'{e}chet differentiable in a strip along the exact solution. All occurring derivatives are supposed to be uniformly bounded.
\end{assumption}

This framework allows us to consider a wide variety of models, for instance, reaction-diffusion problems, the incompressible Navier-Stokes equation or complex population models, see, e.g., \cite{Lunardi, Yagi}.

We again assume that we can compute a smooth function $z$ that solves the elliptic equation~(\ref{elliptic}). Then, we consider the function $w = u - z$, that is, the solution of the equation
\begin{align}\label{nonlinearcorrproblem}
&\left\{
\begin{aligned}
\partial_t w & = Dw + f(t,w+z) + k(t), \\
B w & = 0, \\
w(0) & = u_0 - z(0),
\end{aligned}
\right.
\end{align}
where $k = Dz - \partial_t z$. Note that once a solution $z$ of (\ref{elliptic}) is fixed, the new source term
$$
g(t,w) = f(t,w+z) + k(t)
$$
satisfies Assumption 2 whenever the function $f$ does. With this correction, our problem can be formulated in an abstract framework \cite{Henry, Pazy} as
\begin{align}\label{nonlinearproblemasbtract}
&\left\{
\begin{aligned}
w'(t) & = Aw(t) + g(t,w), \\
w(0) & = w_0.
\end{aligned}
\right.
\end{align}
For the solution of (\ref{nonlinearproblemasbtract}) we apply an explicit exponential Runge--Kutta method as in \cite{HO05e}, given~by
\begin{equation}\label{wnumsemischeme}
\begin{aligned}
w_{n+1} &= \text{e}^{\tau A} w_n + \tau \sum_{i=1}^s b_i\left(\tau A\right)G_{ni}, \\
W_{ni} &= \text{e}^{c_i \tau  A} w_n + \tau \sum_{j=1}^{i-1} a_{ij}\left(\tau A\right)G_{nj}, \\
G_{ni} &= f\bigl(t_n + c_i \tau, W_{ni} + z(t_n + c_i \tau)\bigr) + k(t_n+c_i\tau).
\end{aligned}
\end{equation}

The coefficients $a_{ij}, b_i$ in the previous scheme are designed to satisfy the so-called stiff order conditions (see Table 2 in \cite{HO05e}). Theorems~4.3 and 4.7 in \cite{HO05e} prove convergence of the methods satisfying these conditions under Assumptions 1-3. Here, we extend these results to the case of non-homogeneous boundary values by means of the correction previously explained. Before presenting the theorem, we express the numerical scheme \eqref{wnumsemischeme} in terms of $u$:
\begin{equation}\label{unumsemischeme}
\begin{aligned}
u_{n+1} &= \text{e}^{\tau A} \left(u_n - z(t_n)\right) + z(t_{n+1}) + \tau \sum_{i=1}^s b_i\left(\tau A\right)\,G_{ni}, \\
U_{ni} &= \text{e}^{\tau A} \left(u_n - z(t_n)\right) + z(t_n + c_i\tau) + \tau \sum_{j=1}^{i-1} a_{ij}\left(\tau A\right)\,G_{nj}, \\
G_{ni} &= f(t_n + c_i \tau, U_{ni})+ k(t_n + c_i \tau) .
\end{aligned}
\end{equation}
We are now ready to state our convergence result.

\begin{theorem}\label{thm:order-p}
Let the initial boundary value problem (\ref{nonlinearproblem}) satisfy Assumptions 1-3, and consider the solution given by the exponential Runge--Kutta method (\ref{unumsemischeme}). Then, the order of convergence of (\ref{unumsemischeme}) is the same as that of (\ref{wnumsemischeme}). In particular, if the stiff order conditions up to order $p$ are satisfied then convergence of order at least $p$ occurs.
\end{theorem}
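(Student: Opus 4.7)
The plan is to reduce Theorem~\ref{thm:order-p} to the convergence results for homogeneous problems proved in \cite[Thms.~4.3 and 4.7]{HO05e}. The starting point is the observation, already built into the derivation of~(\ref{unumsemischeme}), that the substitutions $w_n := u_n - z(t_n)$ and $W_{ni} := U_{ni} - z(t_n+c_i\tau)$ transform~(\ref{unumsemischeme}) into exactly the scheme~(\ref{wnumsemischeme}) for the abstract homogeneous problem~(\ref{nonlinearproblemasbtract}) with nonlinearity $g(t,w) := f(t,w+z(t)) + k(t)$. Because the same substitution at the continuous level maps the exact solution $u$ of~(\ref{nonlinearproblem}) to the exact solution $w = u - z$ of~(\ref{nonlinearproblemasbtract}), one has the pointwise identity $u_n - u(t_n) = w_n - w(t_n)$. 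It therefore suffices to verify that~(\ref{nonlinearproblemasbtract}), equipped with $g$, satisfies Assumptions~\ref{ass:sg}--\ref{ass:reg}, and then invoke the cited theorems.

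The verification itself is routine. Assumption~\ref{ass:sg} is preserved because the operator $A$ is unchanged. For Assumption~\ref{ass:lip}, the identity
\begin{equation*}
g(t,w_1) - g(t,w_2) = f(t,w_1+z(t)) - f(t,w_2+z(t))
\end{equation*}
shows that $g$ is locally Lipschitz on balls in $X_{\alpha}$ of radius $R$ with the same Lipschitz constant as $f$ on balls of radius $R + M_z$, where $M_z := \max_{0\le t\le T}\|z(t)\|_{\alpha}$. For Assumption~\ref{ass:reg}, the smoothness of the extension $z$ (for instance, the harmonic extension from~(\ref{elliptic}) or any of the alternatives discussed in Section~\ref{sec:lin-prob}), combined with the uniform boundedness of the Fr\'echet derivatives of $f$, implies that $w\mapsto g(t,w)$ has bounded Fr\'echet derivatives up to the required order along the exact solution; moreover $k = Dz - \partial_t z$ is smooth in $t$ with values in $X$, and $w = u - z$ inherits the temporal regularity of $u$.

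With these ingredients in place, \cite[Thms.~4.3 and 4.7]{HO05e} yield convergence of order at least $p$ for~(\ref{wnumsemischeme}) whenever the stiff order conditions up to order $p$ are satisfied, and by the identity above the same bound transfers verbatim to~(\ref{unumsemischeme}). The only genuine hurdle I foresee is ensuring that the extension $z$ admits enough spatial and temporal regularity so that the derivatives $g^{(j)}$ and the composed expressions appearing in the stiff-order-$p$ remainder analysis of~\cite{HO05e} lie in the spaces required there. For the harmonic extension this reduces to standard elliptic regularity applied pointwise in $t$ under smooth boundary data $b$; for the parabolic extension it follows from parabolic regularity; and for the spatial filtering technique of~\cite{EMO18} the needed smoothness is built into the construction. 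In every case covered by our framework the obstacle is bookkeeping rather than any new estimate, so the proof reduces to a clean transplantation of the homogeneous theory through the substitution $w = u - z$.
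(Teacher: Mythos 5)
Your proposal is correct and follows essentially the same route as the paper: both reduce the theorem to \cite[Thms.~4.3 and 4.7]{HO05e} via the substitution $w = u - z$, using the identity $u_n - u(t_n) = w_n - w(t_n)$ and the observation that $g(t,w)=f(t,w+z)+k(t)$ inherits Assumptions~\ref{ass:lip} and~\ref{ass:reg} from $f$ and the smoothness of $z$ (the paper records this via $\partial g/\partial w(t,w(t)) = \partial f/\partial u(t,u(t))$). Your version merely spells out the verification of the assumptions in more detail than the paper does.
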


\begin{proof}
Using
\begin{equation*}
\dfrac{\partial g}{\partial w} \left(t, w(t)\right) = \dfrac{\partial f}{\partial u} \left(t, u(t)\right)
\end{equation*}
and the fact that $w_n - w(t_n) = u_n - u(t_n)$, the application of Theorem 4.7 in \cite{HO05e} to (\ref{wnumlinearscheme}) leads to the desired result.
\end{proof}

We illustrate the previous result with some numerical examples. In all examples, we use the same three exponential Runge--Kutta methods, but make different choices for the correction $z$. In the following Butcher tableaux, we use the abbreviated notation
\begin{equation*}
\varphi_{j,k} = \varphi_j(c_k\tau A), \quad \varphi_j = \varphi_j(\tau A).
\end{equation*}
The first method we consider is the exponential version of the Euler method, with Butcher tableau:
\begin{equation}\label{tab:euler}
\begin{tabular}{c|c}
0 &  \\
\hline
 & $\varphi_1$ \\
\end{tabular}
\end{equation}
This method has order of convergence $p=1$. The second method is one of the proposed schemes by Strehmel and Weiner \cite{StrWei}, with tableau:
\begin{equation}\label{tab:sw}
\begin{tabular}{c|cc}
0 &  &  \\
$\frac{1}{2}$ & $\frac{1}{2}\, \varphi_{1,2}$ \\[3pt]
\hline
\rule{0pt}{11pt}& 0 & $\varphi_1$ \\
\end{tabular}
\end{equation}
We expect our experiments to show order $2$ with this method. The last method we consider is the following scheme proposed by Krogstad \cite{Krogstad}:
\begin{equation}\label{tab:krog}
\begin{tabular}{c|cccc}
0 &   &   &   &   \\
$\frac{1}{2}$ & $\frac{1}{2} \varphi_{1,2}$ &   &   &   \\[2pt]
$\frac{1}{2}$ & $\frac{1}{2} \varphi_{1,3} - \varphi_{2,3}$ & $\varphi_{2,3}$ &   &   \\[2pt]
1 & $\varphi_{1,4} - 2\varphi_{2,4}$ & 0 & $2 \varphi_{2,4}$ &   \\[2pt]
\hline
\rule{0pt}{11pt}& $\varphi_1 - 3 \varphi_2 + 4 \varphi_3$ & $2\varphi_2 - 4 \varphi_3$ & $2\varphi_2 - 4 \varphi_3$ & $-\varphi_2 + 4 \varphi_3$ \\
\end{tabular}
\end{equation}
This method has at least order $p=3$, but, as it is shown in \cite{HO05e}, it has even order $3 + \beta$, where $0 \leq \beta \leq 1$ is such that $\|A^{-1} J A^{\beta}\|$ is bounded and where
\begin{equation*}
J = \dfrac{\partial g}{\partial w}\left(t,w(t)\right).
\end{equation*}
In the examples below, the actions $\varphi_k(\tau A)u$ of matrix functions on vectors are computed using fast Fourier techniques.

\begin{example}\label{ex:e3}
We consider the following one-dimensional semilinear parabolic problem with time-dependent Dirichlet boundary conditions:
\begin{equation}\label{eq:ex3}
\begin{alignedat}{2}
u_t & = u_{xx} + u^2, &&0 \leq x \leq 1,\quad 0 \leq t \leq 0.5,\\
u(0,x)  & = 1 + \sin(\pi\left(x - 0.5\right)), &&0 \leq x \leq 1, \\
u(t,0) & = 1 - \exp(-\pi^2 t), \quad  u(t,1)  =  1 + \exp(-\pi^2 t),\qquad\quad &&0 \leq t \leq 0.5.
\end{alignedat}
\end{equation}
We approximate the solutions of the latter numerically by the scheme (\ref{unumsemischeme}) with different corrections $z$. In the first case, we use a correction $z$ satisfying $z_t = z_{xx}$, that is, $k = 0$,
\begin{equation}\label{eq:ex3-c1}
z(t,x) = 1 + \exp(-\pi^2 t) \sin(\pi(x-0.5)).
\end{equation}
A grid of $N = 512$ points and standard second order finite differences are used for the spatial discretization. The reference solution is computed with Krogstad's method and $\tau = \frac1{40000}$. In this example it happens that $\|A^{-1} J A\|$ is bounded, so we expect order 4 for Krogstad's method. The results, which are in accordance with expectations, are shown in Table~\ref{exp1tab1}.

\begin{table}[hb]
\def\arraystretch{1.1}
\caption{Errors and orders of convergence obtained by integrating \eqref{eq:ex3} with three different exponential Runge--Kutta methods of type~\eqref{unumsemischeme} using the correction~\eqref{eq:ex3-c1}. The errors are measured at time $t=0.5$. \label{exp1tab1}}
\begin{center}
\vspace{-2mm}
\small
\begin{tabular}{rrrrrrrrrr}
&& \multicolumn{2}{c}{Euler} && \multicolumn{2}{c}{Strehmel and Weiner} && \multicolumn{2}{c}{Krogstad} \\
\cline{3-4} \cline{6-7}  \cline{9-10}
\multicolumn{1}{c}{step size} && \multicolumn{1}{l}{\rule{0pt}{11pt}$L^{2}$ error} & \multicolumn{1}{r}{order} & &
\multicolumn{1}{l}{$L^{2}$ error} & \multicolumn{1}{r}{order} & &
\multicolumn{1}{l}{$L^{2}$ error} & \multicolumn{1}{r}{order}\\
\hline
\rule{0pt}{11pt}
5.000e-02  & & 2.121e-04  & --  & & 3.175e-05  & --  & & 2.435e-07  & --  \\
2.500e-02  & & 1.014e-04  & 1.06  & & 8.064e-06  & 1.98  & & 1.607e-08  & 3.92  \\
1.250e-02  & & 4.981e-05  & 1.03  & & 2.039e-06  & 1.98  & & 1.031e-09  & 3.96  \\
6.250e-03  & & 2.469e-05  & 1.01  & & 5.141e-07  & 1.99  & & 6.534e-11  & 3.98  \\
3.125e-03  & & 1.229e-05  & 1.01  & & 1.293e-07  & 1.99  & & 4.023e-12  & 4.02  \\
\end{tabular}
\vspace{-3mm}
\end{center}
\end{table}

In the second case, we use a correction $z$ satisfying $z_{xx} = 0$,
\begin{equation}\label{eq:ex3-c2}
z(t,x) = 1 + (2x-1)\exp(-\pi^2 t).
\end{equation}
We use the same grid, spatial discretization and reference solution. The results are given in Table \ref{exp1tab2}.

\begin{table}[t]
\def\arraystretch{1.1}
\caption{Errors and orders of convergence obtained by integrating \eqref{eq:ex3} with three different exponential Runge--Kutta methods of type~\eqref{unumsemischeme} using the correction~\eqref{eq:ex3-c2}. The errors are measured at time $t=0.5$.\label{exp1tab2}}
\begin{center}
\vspace{-3mm}
\small
\begin{tabular}{rrrrrrrrrr}
&& \multicolumn{2}{c}{Euler} && \multicolumn{2}{c}{Strehmel and Weiner} && \multicolumn{2}{c}{Krogstad} \\
\cline{3-4} \cline{6-7}  \cline{9-10}
\multicolumn{1}{c}{step size} && \multicolumn{1}{l}{\rule{0pt}{11pt}$L^{2}$ error} & \multicolumn{1}{r}{order} & &
\multicolumn{1}{l}{$L^{2}$ error} & \multicolumn{1}{r}{order} & &
\multicolumn{1}{l}{$L^{2}$ error} & \multicolumn{1}{r}{order}\\
\hline
\rule{0pt}{11pt}
2.500e-02  & & 2.420e-04  & --  & & 3.111e-05  & --  & & 5.854e-08  & --  \\
1.250e-02  & & 1.097e-04  & 1.14  & & 8.102e-06  & 1.94  & & 3.852e-09  & 3.93  \\
6.250e-03  & & 5.232e-05  & 1.07  & & 2.081e-06  & 1.96  & & 2.489e-10  & 3.95  \\
3.125e-03  & & 2.556e-05  & 1.03  & & 5.295e-07  & 1.97  & & 1.590e-11  & 3.97  \\
1.563e-03  & & 1.263e-05  & 1.02  & & 1.339e-07  & 1.98  & & 1.039e-12  & 3.94  \\
\end{tabular}
\vspace{-3mm}
\end{center}
\end{table}

For this example, we also present the errors in the norm of $X_{\alpha}$, with $\alpha = 0.5$. Recall that our convergence result for exponential methods, Theorem \ref{thm:order-p}, refers to convergence results in \cite{HO05e}. There, it is explained and numerically verified that when using the norm $X_{\alpha}$ with $\alpha = 0.5$, a reduction of up to $0.25$ in the order of convergence is expected for the method of Strehmel and Weiner, and for that of Krogstad, respectively, while the exponential Euler method is still expected to maintain order~1. The numerical results in Table~\ref{exp1tab3} are consistent with these expectations.

\begin{table}[t]
\def\arraystretch{1.1}
\caption{Errors and orders of convergence obtained by integrating \eqref{eq:ex3} with three different exponential Runge--Kutta methods of type~\eqref{unumsemischeme} using the correction~\eqref{eq:ex3-c2}. The errors measured at time $t=0.5$ in the norm of $X_{\alpha}$ for $\alpha = 0.5$.\label{exp1tab3}}
\begin{center}
\vspace{-3mm}
\small
\begin{tabular}{rrrrrrrrrr}
&& \multicolumn{2}{c}{Euler} && \multicolumn{2}{c}{Strehmel and Weiner} && \multicolumn{2}{c}{Krogstad} \\
\cline{3-4} \cline{6-7}  \cline{9-10}
\multicolumn{1}{c}{step size} && \multicolumn{1}{l}{\rule{0pt}{11pt}$H^{1}$ error} & \multicolumn{1}{r}{order} & &
\multicolumn{1}{l}{$H^{1}$ error} & \multicolumn{1}{r}{order} & &
\multicolumn{1}{l}{$H^{1}$ error} & \multicolumn{1}{r}{order}\\
\hline
\rule{0pt}{11pt}
2.500e-02  & & 1.787e-03  & --  & & 2.660e-04  & --  & & 4.686e-07  & --  \\
1.250e-02  & & 8.009e-04  & 1.16  & & 7.573e-05  & 1.81  & & 3.338e-08  & 3.81  \\
6.250e-03  & & 3.787e-04  & 1.08  & & 2.163e-05  & 1.81  & & 2.376e-09  & 3.81  \\
3.125e-03  & & 1.840e-04  & 1.04  & & 6.207e-06  & 1.80  & & 1.697e-10  & 3.81  \\
1.563e-03  & & 9.070e-05  & 1.02  & & 1.790e-06  & 1.79  & & 1.227e-11  & 3.79  \\
\end{tabular}
\vspace{-3mm}
\end{center}
\end{table}

\end{example}


\begin{table}[t]
\def\arraystretch{1.1}
\caption{Errors and orders of convergence obtained by integrating \eqref{eq:ex4} with three different exponential Runge--Kutta methods of type~\eqref{unumsemischeme} using the correction~\eqref{eq:ex4-c1}. The errors are measured at time $t=1$.\label{exp2tab1}}
\begin{center}
\vspace{-3mm}
\small
\begin{tabular}{rrrrrrrrrr}
&& \multicolumn{2}{c}{Euler} && \multicolumn{2}{c}{Strehmel and Weiner} && \multicolumn{2}{c}{Krogstad} \\
\cline{3-4} \cline{6-7}  \cline{9-10}
\multicolumn{1}{c}{step size} && \multicolumn{1}{l}{\rule{0pt}{11pt}$L^{2}$ error} & \multicolumn{1}{r}{order} & & \multicolumn{1}{l}{$L^{2}$ error} & \multicolumn{1}{r}{order} & & \multicolumn{1}{l}{$L^{2}$ error} & \multicolumn{1}{r}{order}\\
\hline
\rule{0pt}{11pt}
5.000e-02  & & 5.037e-02  & --  & & 2.753e-03  & --  & & 2.974e-07  & --  \\
2.500e-02  & & 2.627e-02  & 0.94  & & 7.175e-04  & 1.94  & & 2.200e-08  & 3.76  \\
1.250e-02  & & 1.343e-02  & 0.97  & & 1.830e-04  & 1.97  & & 1.719e-09  & 3.68  \\
6.250e-03  & & 6.792e-03  & 0.98  & & 4.621e-05  & 1.99  & & 1.407e-10  & 3.61  \\
3.125e-03  & & 3.416e-03  & 0.99  & & 1.161e-05  & 1.99  & & 1.216e-11  & 3.53  \\
\end{tabular}
\vspace{-3mm}
\end{center}
\end{table}

\begin{example}\label{ex:e4}
We now take into account the following one-dimensional semilinear parabolic problem with time-dependent oblique boundary conditions:
\begin{equation}\label{eq:ex4}
\begin{alignedat}{2}
u_t & = u_{xx} + u^2, &&0 \leq x \leq 1, \quad 0 \leq t \leq 1,\\
u(0,x)  & = 1 + \sin\bigl(0.5\,\pi\left(x - 1\right)\bigr), \qquad\quad&&0 \leq x \leq 1, \\
u(t,0) & = 1 - \exp\bigl(-0.25 \pi^2 t\bigr),  &&0 \leq t \leq 1,\\
u_x(t,1)  &=  \tfrac{\pi}2 \exp\bigl(-0.25 \pi^2 t\bigr), &&0 \leq t \leq 1.
\end{alignedat}
\end{equation}
We start with a first correction $z$ satisfying $z_t = z_{xx}$,
\begin{equation}\label{eq:ex4-c1}
z(t,x) = 1 + \exp(-0.25\pi^2 t) \sin\bigl(0.5\pi(x-1)\bigr).
\end{equation}
A grid of $N = 256$ points and standard second order finite differences are used for the spatial discretization. The reference solution is computed with Krogstad's method with the step size $\tau = \frac1{20000}$. In this example it happens that only $\|A^{-1} J A^{1/2}\|$ is bounded, so we expect order 3.5 for Krogstad's method. The results are shown in Table \ref{exp2tab1}.

In the second case, we use a correction $z$ satisfying $\Delta z = 0$,
\begin{equation}\label{eq:ex4-c2}
z(t,x) = 1 + (0.5\pi x-1)\exp\bigl(-0.25\pi^2 t\bigr).
\end{equation}
A grid of $N = 256$ points and standard second order finite differences are used in the spatial discretization. The reference solution is computed with Krogstad's method and step size $\tau = \frac1{20000}$. The results are given in Table \ref{exp2tab2}.

\begin{table}[ht]
\def\arraystretch{1.1}
\caption{Errors and orders of convergence obtained by integrating \eqref{eq:ex4} with three different exponential Runge--Kutta methods of type~\eqref{unumsemischeme} using the correction~\eqref{eq:ex4-c2}. The errors are measured at time $t=1$.\label{exp2tab2}}
\begin{center}
\vspace{-3mm}
\small
\begin{tabular}{rrrrrrrrrr}
&& \multicolumn{2}{c}{Euler} && \multicolumn{2}{c}{Strehmel and Weiner} && \multicolumn{2}{c}{Krogstad} \\
\cline{3-4} \cline{6-7}  \cline{9-10}
\multicolumn{1}{c}{step size} && \multicolumn{1}{l}{\rule{0pt}{11pt}$L^{2}$ error} & \multicolumn{1}{r}{order} & & \multicolumn{1}{l}{$L^{2}$ error} & \multicolumn{1}{r}{order} & & \multicolumn{1}{l}{$L^{2}$ error} & \multicolumn{1}{r}{order}\\
\hline
\rule{0pt}{11pt}
5.000e-02  & & 4.999e-02  & --  & & 2.634e-03  & --  & & 2.485e-07  & --  \\
2.500e-02  & & 2.608e-02  & 0.94  & & 6.829e-04  & 1.95  & & 1.740e-08  & 3.84  \\
1.250e-02  & & 1.334e-02  & 0.97  & & 1.736e-04  & 1.98  & & 1.374e-09  & 3.66  \\
6.250e-03  & & 6.746e-03  & 0.98  & & 4.376e-05  & 1.99  & & 1.161e-10  & 3.57  \\
3.125e-03  & & 3.393e-03  & 0.99  & & 1.098e-05  & 1.99  & & 9.423e-12  & 3.62  \\
\end{tabular}
\vspace{-3mm}
\end{center}
\end{table}

\end{example}


\begin{table}[t]
\def\arraystretch{1.1}
\caption{Errors and orders of convergence obtained by integrating \eqref{eq:ex5} with three different exponential Runge--Kutta methods of type~\eqref{unumsemischeme} and a numerically computed correction, see text. The errors are measured at time $t=0.5$.\label{exp3tab1}}
\begin{center}
\vspace{-3mm}
\small
\begin{tabular}{rrrrrrrrrr}
&& \multicolumn{2}{c}{Euler} && \multicolumn{2}{c}{Strehmel and Weiner} && \multicolumn{2}{c}{Krogstad} \\
\cline{3-4} \cline{6-7}  \cline{9-10}
\multicolumn{1}{c}{step size} && \multicolumn{1}{l}{\rule{0pt}{11pt}$L^{2}$ error} & \multicolumn{1}{r}{order} & &
\multicolumn{1}{l}{$L^{2}$ error} & \multicolumn{1}{r}{order} & &
\multicolumn{1}{l}{$L^{2}$ error} & \multicolumn{1}{r}{order}\\
\hline
\rule{0pt}{11pt}
1.250e-02  & & 1.334e-05  & --         & & 2.735e-07  & --     & & 9.499e-09  & --         \\
6.250e-03  & & 6.110e-06  & 1.13       & & 6.216e-08  & 2.14   & & 1.051e-09  & 3.17       \\
3.125e-03  & & 2.925e-06  & 1.06       & & 1.480e-08  & 2.07   & & 8.662e-11  & 3.60       \\
1.563e-03  & & 1.431e-06  & 1.03       & & 3.634e-09  & 2.03   & & 6.167e-12  & 3.85       \\
7.813e-03  & & 7.079e-07  & 1.02       & & 9.332e-10  & 2.01   & & 5.608e-13  & 3.94       \\
\end{tabular}
\vspace{-3mm}
\end{center}
\end{table}

\begin{example} \label{ex:e5}
We consider the domain $\Omega = (0,1)\times(0,1)$ and study the two-dimensional semilinear parabolic problem subject to Dirichlet boundary conditions:
\begin{equation}\label{eq:ex5}
\begin{alignedat}{3}
u_t & = \Delta u + u^2, \qquad\quad && (x,y) \in \Omega, &&0 \leq t \leq 0.5, \\
u(0,x,y) & = f(x,y), &&(x,y) \in \Omega, \\
u(t,x,y) & = f(x,y), &&(x,y) \in \partial\Omega,\quad &&0 \leq t \leq 0.5,
\end{alignedat}
\end{equation}
where
\begin{align*}
f(x,y) &=  0.5 + 2\exp\bigl(-40(x-0.5-0.1\cos^2 \pi y)\bigr) + 2\exp\bigl(-35(y-0.5-0.1\sin^2 2\pi x)\bigr) \\
&\quad -  2\exp\bigl(-35 \bigl((x-0.5)^2 + (y-0.5)^2\bigr)\bigr).
\end{align*}
In this problem, $z$ is computed by numerically solving the Laplace equation with the prescribed boundary conditions. A grid of $N = 128$ points per direction and standard second order finite differences are used for the spatial discretization. The reference solution is computed with the explicit Runge--Kutta method RK4 and step size $\tau = 10^{-5}$. We expect again order~$4$ for Krogstad. The results are shown in Table \ref{exp3tab1}.

\end{example}

\section{Conclusions}\label{sec:conclude}

The standard construction of exponential Runge--Kutta methods relies on semigroup theory, which only covers homogeneous and periodic boundary conditions. Direct application of such integrators to problems with non-homogeneous boundary conditions typically results in a reduction in order and loss of accuracy. In this paper, we proposed a simple modification that restores the convergence order. The required solutions to stationary elliptic problems can be obtained using standard techniques, which are also employed in splitting methods. Because the additional computational cost is negligible, the proposed approach yields highly competitive schemes. Similar modifications can be made for other types of integrators, such as implicit Runge--Kutta methods, (exponential) Rosenbrock methods, and (exponential) Lawson methods.

\section{Acknowledgments}
Carlos Arranz-Simón is supported by the Spanish Ministerio de Ciencia e Innovación through Project PID2023-147073NB-I00.



\end{document}